%
%
%
%
\documentclass[preprint,review,10pt]{amsart}
\usepackage{amsmath,amssymb,amsfonts,xspace}
\newtheorem{theorem}{Theorem}[section]

\theoremstyle{definition}
\newtheorem{definition}[theorem]{Definition}
\newtheorem{example}[theorem]{Example}

\newtheorem{corollary}[theorem]{Corollary}
\newtheorem{lem}[theorem]{Lemma}

\theoremstyle{remark}
\newtheorem{remark}[theorem]{Remark}

\numberwithin{equation}{section}



\begin{document}

\title{ (weakly) $(s,n)$-closed hyperideals }

\author{Mahdi Anbarloei}
\address{Department of Mathematics, Faculty of Sciences,
Imam Khomeini International University, Qazvin, Iran.
}

\email{m.anbarloei@sci.ikiu.ac.ir}


\subjclass[2010]{ Primary 20N20; Secondary 16Y99}


\keywords{  $(s,n)$-closed hyperideal, weakly $(s,n)$-closed hyperideal, $(s,n)$-tough-zero element.}

\begin{abstract}
A multiplicative hyperring is a well-known  type of algebraic hyperstructures which extend a ring  to  a structure in which the addition is an operation but the multiplication is a hyperoperation. Let $G$ be a commutative  multiplicative hyperring and $s,n \in \mathbb{Z}^+$. A proper hyperideal $Q$ of $G$ is called (weakly) $(s,n)$-closed  if ($0 \neq a^s \subseteq Q$ ) $a^s \subseteq Q$ for $a \in G$ implies $a^n \subseteq Q$.    In this paper, we aim to  investigate  (weakly) $(s,n)$-closed hyperideals and give some results explaining  the structures of these notions. 
\end{abstract}
\maketitle

\section{Introduction}
Hyperstructures, which are a generalization of classical algebraic structures, take an important place in both pure and applied mathematics. These structures was first introduced by Marty in 1934 \cite{marty}. He published some notes on hypergroups as a generalization of groups. Later on, many authors have worked on this new topic of modern algebra and developed it \cite{f1,f2,f3,f4,f5,f7,f8,f9}. Similar to hypergroups, hyperrings are algebraic structures, subsitutiting both or only one of the binary operations of addition and multiplication by hyperoperations.
A importan type of a hyperring called the multiplicative hyperring was introduced by Rota in 1982 \cite{f14}. In this hyperstructure,  the multiplication is a hyperoperation, while the addition is an operation.  Many illustrations and results of the multiplicative hyperring can be seen in  \cite{ameri5, ameri6, anb1, anb2, anb3, Ghiasvand, ul}. 

A hyperoperation "$\circ $" on nonempty set $A$ is a mapping of $A \times A$ into $P^*(A)$ where $P^*(A)$ is the family of all nonempty subsets of $A$. If "$\circ $" is a hyperoperation on $A$, then $(A,\circ)$ is called hypergroupoid. The hyperoperation on $A$ can be extended to  subsets of $A$. Let $A_1,A_2$ be two subsets of $A$ and $a \in A$, then $A_1 \circ A_2 =\cup_{a_1 \in A_1, a_2 \in A_2}a_1 \circ a_2,$ and $ A_1 \circ a=A_1 \circ \{a\}.$ A hypergroupoid $(A, \circ)$ is called  a semihypergroup if $\cup_{v \in b \circ c}a \circ v=\cup_{u \in a \circ b} u \circ c $ for all $a,b,c \in A$ which means $\circ$ is associative. A semihypergroup is said to be a hypergroup if  $a \circ A=A=A\circ a$ for all  $a \in A$. A nonempty subset $B$ of a semihypergroup $(A,\circ)$ is called a
subhypergroup if  we have $a \circ B=B=B \circ a$ for all $b \in B$. Recall from \cite{f10} that an algebraic structure $(G,+,\circ)$ is said to be commutative multiplicative hyperring if (1) $(G,+)$ is a commutative group; (2) $(G,\circ)$ is a semihypergroup; (3) for all $a, b, c \in G$, we have $a\circ (b+c) \subseteq a\circ b+a\circ c$ and $(b+c)\circ a \subseteq b\circ a+c\circ a$; (4) for all $a, b \in G$, we have $a\circ (-b) = (-a)\circ b = -(a\circ b)$; (5) for all $a,b \in G$, $a \circ b =b \circ a$
If in (3) the equality holds then we say that the multiplicative hyperring is strongly distributive. Let $(\mathbb{Z},+,\cdot)$ be the ring of integers. Corresponding to every subset $X \in P^\star(\mathbb{Z})$ with $\vert X\vert \geq 2$, there exists a multiplicative hyperring $(\mathbb{Z}_X,+,\circ)$ with $\mathbb{Z}_X=\mathbb{Z}$ and for any $a,b\in \mathbb{Z}_X$, $a \circ b =\{a.x.b\ \vert \ x \in X\}$.
A non empty subset $H$ of a multiplicative hyperring $R$ is a  hyperideal of $G$ if (i)  $a - b \in H$ for all $a, b \in H$; (ii) $r \circ a \subseteq H$ for all $a \in H $ and $r \in G$.
The notion of primeness of hyperideal in a multiplicative hyperring was conceptualized by Procesi and rota in \cite{f16}. Dasgupta extended the prime and primary hyperideals in multiplicative hyperrings in \cite{das}.
Recall from \cite{das} that a proper hyperideal $P$ of multiplicative hyperring  $G$ is called a prime hyperideal if $xa \circ b \subseteq P$ for $a,b \in G$ implies that $a \in P$ or $b \in P$. The intersection of all prime hyperideals of $R$ containing hyperideal $I$ is called the prime radical of $I$, being denoted by $rad (I)$. If the multiplicative hyperring $G$ does not have any prime hyperideal containing $I$, we define $rad(I)=G$. Let {\bf C} be the class of all finite products of elements of $R$ i.e. ${\bf C} = \{r_1 \circ r_2 \circ . . . \circ r_n \ : \ r_i \in R, n \in \mathbb{N}\} \subseteq P^{\ast }(R)$. A hyperideal $I$ of $R$ is said to be a {\bf C}-hyperideal of $R$ if, for any $A \in {\bf C}, A \cap I \neq \varnothing $ implies $A \subseteq I$.
Let I be a hyperideal of $R$. Then, $D \subseteq rad(I)$ where $D = \{r \in R: r^n \subseteq I \ for \ some \ n \in \mathbb{N}\}$. The equality holds when $I$ is a {\bf C}-hyperideal of $R$(\cite {das}, proposition 3.2). Recall that a hyperideal $I$ of $R$ is called a strong {\bf C}-hyperideal if for any $E \in \mathfrak{U}$, $E \cap I \neq \varnothing$, then $E \subseteq I$, where $\mathfrak{U}=\{\sum_{i=1}^n A_i \ : \ A_i \in {\bf C}, n \in \mathbb{N}\}$ and ${\bf C} = \{r_1 \circ r_2 \circ . . . \circ r_n \ : \ r_i \in R, n \in \mathbb{N}\}$ (for more details see \cite{phd}). Recall from \cite{ameri} that 
a proper hyperideal $I$ of multiplicative hyperring
 $R$ is maximal in R if for
any hyperideal $J$ of $R$ with $I \subseteq J \subseteq R$ then $J = R$. Also, we say that $R$ is a local multiplicative hyperring, if it has just one maximal hyperideal.   An element $e$ in $G$ is  an identity element if $a \in a\circ e$ for all $a \in G$. Moreover, an element $e$ in $G$ is  a scalar identity element if $a= a\circ e$ for all $a \in G$.

 In this paper, we introduce  the notions of $(s,n)$-closed hyperideals and weakly $(s,n)$-closed hyperideals of a commutative multiplicative hyperring.   Many specific results  are presented to explain the structures of these notions. Additionally,  we study the relationships between the new
hyperideals and  classical hyperideals and explore some ways to connect them. 
Throughout this paper, $G$ denotes a commutative multiplicative hyperring.
\section{ $(s, n)$-closed hyperideals }
In this section, we define the concept of $(s, n)$-closed hyperideals  and present some basic properties of $(s, n)$-closed hyperideals. 
\begin{definition}
Let $s,n \in \mathbb{Z}^+$. A proper hyperideal $Q$ of $G$  is called  $(s,n)$-closed if $a^s \subseteq Q$ for $a \in G$ implies $a^n \subseteq Q$.
\end{definition}
It is clear that every $(s,n)$-closed hyperideal of $G$ is $(s^{\prime},n^{\prime})$-closed for all $s,s^{\prime},n,n^{\prime} \in \mathbb{Z}^+$ with $s^{\prime} \leq s$ and $n^{\prime} \geq n$.
\begin{example}
Consider the set of all integers $\mathbb{Z}$ with ordinary addition  and the  hyperoperation $\circ$ defined as $\alpha \circ \beta =\{2\alpha \beta,  4\alpha \beta\}$ for all $\alpha, \beta \in \mathbb{Z}$. Then $105 \mathbb{Z}$ is an $(s,3)$-closed hyperideal of the multiplicative hyperring $(\mathbb{Z},+,\circ)$ for all $s \in \mathbb{Z}^+$. 
\end{example}
A proper hyperideal $I$ of $G$ refers to an $n$-absorbing hyperideal of $G$ if  $x_1 \circ \cdots \circ x_{n+1} \subseteq I$ for $x_1,\cdots,x_{n+1}   \in G$ implies that there exist  $n$ of the $x_i^,$s whose product is a subset of  $I$ \cite{anb4}.
\begin{theorem} \label{1}
Let $Q$ be an $n$-absorbing ${\bf C}$-hyperideal of $G$. Then $Q$ is an $(s,n)$-closed hyper for each $s \in \mathbb{Z}^+$.
\end{theorem}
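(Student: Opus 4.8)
The plan is to argue directly from the definitions, using the $n$-absorbing property together with the fact that $Q$ is a $\mathbf{C}$-hyperideal. Suppose $a \in G$ satisfies $a^s \subseteq Q$; I must show $a^n \subseteq Q$. If $s \leq n$ the claim is immediate from the remark following the definition (every $(s,n)$-closed-type containment behaves monotonically in $s$ and $n$), so the interesting case is $s > n$. Writing $a^s = a^{n+1} \circ a^{s-n-1}$ (interpreting $a^0$ suitably, or simply handling small exponents separately), the containment $a^s \subseteq Q$ gives $a \circ a \circ \cdots \circ a \subseteq Q$ with $s \geq n+1$ factors.

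First I would invoke the $n$-absorbing hypothesis on the product $a^{n+1} \subseteq Q$: there exist $n$ of the $n+1$ copies of $a$ whose product lies in $Q$, i.e. $a^n \subseteq Q$ directly when $s = n+1$. For $s > n+1$ I would iterate: from $a^s \subseteq Q$ peel off factors one at a time. The key subtlety is that the $n$-absorbing definition as stated applies to products of exactly $n+1$ elements, so to reduce a product of $s$ copies of $a$ down I need to know that $a^{n+1} \subseteq Q$ in the first place; this is where the $\mathbf{C}$-hyperideal property enters. Since $a^s \in \mathbf{C}$ and $a^s \subseteq Q$ means $a^s \cap Q \neq \varnothing$, and since $a^{n+1}$ and $a^s$ are related through the class $\mathbf{C}$, I would use the $\mathbf{C}$-hyperideal condition to promote the partial containment to $a^{n+1} \subseteq Q$: more carefully, one shows $a^{n+1} \circ a^{s-n-1} \subseteq Q$ and that $a^{n+1}$ is a finite product (hence in $\mathbf{C}$), and then applies $n$-absorbing. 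The cleanest route: show by a short induction on $k \geq n$ that $a^k \subseteq Q$ implies $a^{k-1} \subseteq Q$ as long as $k-1 \geq n$, using that $a^k = a \circ a^{k-1}$ is a product of $k \geq n+1$ elements so the $n$-absorbing property yields a sub-product of length $n$ inside $Q$, and the $\mathbf{C}$-hyperideal property lets one conclude the full $a^n$ (not merely a hyperproduct element) is in $Q$; then monotonicity of powers gives $a^{k-1} \subseteq Q$.

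The main obstacle I anticipate is the bookkeeping around the $n$-absorbing definition: it guarantees only that \emph{some} $n$-element subproduct of the $x_i$'s lies in $Q$, but when all the $x_i$ are equal to $a$ every such subproduct equals $a^n$ (as a subset of $G$), so this collapses nicely — I should make sure the hyperoperation's associativity (semihypergroup axiom) is used to justify that $a \circ a^{n} = a^{n+1}$ unambiguously and that all length-$n$ subproducts coincide. The $\mathbf{C}$-hyperideal hypothesis is needed precisely to pass from "a hyperproduct meets $Q$" to "the whole hyperproduct is contained in $Q$," since $n$-absorbing is phrased with $\subseteq$ throughout but the radical-type manipulations produce elements, not subsets. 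Once these two points are handled carefully, the induction closes and gives $a^n \subseteq Q$, establishing that $Q$ is $(s,n)$-closed for every $s \in \mathbb{Z}^+$.
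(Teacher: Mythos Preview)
Your inductive scaffold (reduce $a^k\subseteq Q$ to $a^{k-1}\subseteq Q$ until the exponent reaches $n$) is exactly the paper's, but the justification you give for the inductive step does not work as written. The $n$-absorbing definition in this paper applies only to products of \emph{exactly} $n+1$ elements $x_1\circ\cdots\circ x_{n+1}\subseteq Q$; you cannot invoke it directly on $a^k$ when $k>n+1$ by saying ``it is a product of $k\geq n+1$ elements.'' Your attempt to repair this via the $\mathbf{C}$-hyperideal property is also off target: we are \emph{given} $a^s\subseteq Q$ (a full containment, not merely $a^s\cap Q\neq\varnothing$), so there is nothing to ``promote,'' and the $\mathbf{C}$-hyperideal condition cannot manufacture $a^{n+1}\subseteq Q$ from $a^s\subseteq Q$.

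The missing device, which the paper supplies, is to choose an \emph{element} $b\in a^{k-n}$. Then $a^n\circ b\subseteq a^k\subseteq Q$ is a genuine product of $n+1$ elements of $G$ (namely $a,\ldots,a,b$), so the $n$-absorbing hypothesis applies and gives either $a^n\subseteq Q$ or $a^{n-1}\circ b\subseteq Q$. If the first alternative ever occurs we are done; otherwise the second alternative holds for \emph{every} $b\in a^{k-n}$, whence $a^{n-1}\circ a^{k-n}=a^{k-1}\subseteq Q$, and the descent continues. Note that this argument never actually uses the $\mathbf{C}$-hyperideal hypothesis, so your instinct that it must play a crucial role is misplaced here.
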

\begin{proof}
Assume that $Q$ is an $n$-absorbing hyperideal of $G$ and $a^s=a^n \circ a^{s-n} \subseteq Q$ for $a \in G$ and $s,n \in \mathbb{Z}^+$ with $n <s$. Then we have $a^n \circ b \subseteq Q$ for every $b \in a^{s-n}$. Since $Q$ is an $n$-absorbing hyperideal of $G$, we get $a^n \in Q$ or $a^{n-1}\circ b \in Q$. In the first case, we are done. In the second case, we have $a^{s-1} \subseteq Q$. Consider $a^{s-1}=a^n \circ a^{s-n-1} \subseteq Q$ and pick $c \in a^{s-n-1}$. By continuing the process mentioned, we obtain $a^n \subseteq Q$. Hence $Q$ is $(s,n)$-closed for $s >n$. If $s \leq n$, then $Q$ is $(s,n)$-closed clearly. Thus $Q$ is an $(s,n)$-closed hyperideal for each $s \in \mathbb{Z}^+$.
\end{proof}
\begin{theorem}\label{2}
Let $Q_1,\cdots,Q_t$ be some prime hyperideals of $G$. Then $Q_1 \circ \cdots \circ Q_t$ is an $(s,n)$-closed hyperideal of $G$ for all $s,n \in \mathbb{Z}^+$ with $1 \leq s$ and $\min \{s,t\} \leq n$.
\end{theorem}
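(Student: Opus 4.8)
The plan is to write $Q = Q_1 \circ \cdots \circ Q_t$ and first record two structural facts, then split on whether $s \le n$ or $t \le n$ — one of which must hold since $\min\{s,t\} \le n$. First, $Q$ is a hyperideal, and since for each $i$ every generating product $q_1 \circ \cdots \circ q_t$ contains the factor $q_i \in Q_i$, the absorbing property of the hyperideal $Q_i$ forces $Q \subseteq Q_i$; in particular $Q \subseteq Q_1 \subsetneq G$, so $Q$ is a proper hyperideal. Second, conversely, if an element $a$ lies in $Q_i$ for every $i$, then choosing $q_1 = \cdots = q_t = a$ in $Q_1 \circ \cdots \circ Q_t = \bigcup_{q_i \in Q_i} q_1 \circ \cdots \circ q_t$ shows that $a^t = a \circ \cdots \circ a \subseteq Q$. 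These two observations carry the whole argument.

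Now suppose $a \in G$ satisfies $a^s \subseteq Q$; we must show $a^n \subseteq Q$. If $s \le n$, this is immediate: $a^n = a^s \circ a^{\,n-s} \subseteq Q$ because $Q$ is a hyperideal (reading $a^n = a^s$ when $n = s$), so there is nothing to prove. Hence the only real case is $n < s$, and then $\min\{s,t\} \le n$ forces $t \le n < s$. Using $Q \subseteq Q_i$ we get $a^s \subseteq Q_i$ for each $i$. I would then invoke the elementary fact that for a prime hyperideal $P$ and any $k \ge 1$, $a^k \subseteq P$ implies $a \in P$: induct on $k$, writing $a^k = \bigcup_{c \in a^{k-1}} a \circ c \subseteq P$, and noting that if $a \notin P$, then primeness forces $c \in P$ for every $c \in a^{k-1}$, i.e. $a^{k-1} \subseteq P$, so the induction hypothesis applies. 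Applying this with $P = Q_i$ gives $a \in Q_i$ for $i = 1,\dots,t$, hence $a^t \subseteq Q$ by the second observation; and since $n \ge t$ and $Q$ is a hyperideal, $a^n = a^t \circ a^{\,n-t} \subseteq Q$ (again with $a^n = a^t$ when $n = t$). This shows $Q$ is $(s,n)$-closed.

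The only care needed is bookkeeping: one must consistently interpret the powers $a^k$ as the iterated hyperproducts, which are nonempty subsets of $G$, so that the step ``$a \notin P \Rightarrow a^{k-1} \subseteq P$'' is legitimate, and read the degenerate exponents $n-s = 0$ or $n-t = 0$ as ``no extra factor''. The substantive content lies entirely in the two structural facts about $Q$ — namely $Q \subseteq Q_i$ for all $i$, and $a \in \bigcap_{i=1}^{t} Q_i \Rightarrow a^t \subseteq Q$ — together with the reduction to the case $t \le n < s$; I do not expect any serious obstacle beyond getting that case analysis straight.
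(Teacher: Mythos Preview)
Your proof is correct and follows essentially the same route as the paper's: from $a^s \subseteq Q$ deduce $a^s \subseteq Q_i$ via $Q \subseteq Q_i$, use primeness to get $a \in Q_i$ for every $i$, conclude $a^t \subseteq Q$, and hence $a^n \subseteq Q$ whenever $n \ge \min\{s,t\}$. The only difference is that you are more explicit about the case split $s \le n$ versus $t \le n$, the properness of $Q$, and the induction showing $a^k \subseteq Q_i \Rightarrow a \in Q_i$, all of which the paper treats as immediate.
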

\begin{proof}
Assume that $a^s \subseteq Q_1 \circ \cdots \circ Q_t$ for $a \in G$. This means $a^s \subseteq Q_i$ for each $i \in \{1,\cdots,t\}$. Since $Q_i$ is a prime hyperideal of $G$, we have $a \in Q_i$. Hence $a^t \subseteq Q_1\circ \cdots \circ Q_t$. This implies that $a^n \subseteq Q_1\circ \cdots \circ Q_t$ for all $\min\{s,t\} \leq n$.
\end{proof}
\begin{theorem}\label{3}
Let $Q_1,\cdots,Q_t$ be some hyperideals of $G$ such that for each $i \in \{1,\cdots,t\}$,  $Q_i$ is $(s_i,n_i)$-closed with $s_i,n_i \in \mathbb{Z}^+$. Then
\end{theorem}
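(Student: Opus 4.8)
The statement is truncated in this excerpt; the natural completion, consistent with the elementary remark recorded just after the definition of $(s,n)$-closed, is that the intersection $\bigcap_{i=1}^{t}Q_i$ is an $(s,n)$-closed hyperideal of $G$ for $s=\min\{s_1,\cdots,s_t\}$ and $n=\max\{n_1,\cdots,n_t\}$ (and hence, by that same remark, for every $s'\le s$ and $n'\ge n$). I describe a proof of this.

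The plan is to reduce everything to a single common pair of exponents and then intersect. First I would note that $Q=\bigcap_{i=1}^{t}Q_i$ is a proper hyperideal: the two defining conditions of a hyperideal are preserved under intersection by a routine check, and $Q\subseteq Q_1\neq G$. Next, using the remark that an $(s_i,n_i)$-closed hyperideal is automatically $(s',n')$-closed whenever $s'\le s_i$ and $n'\ge n_i$, I would upgrade each $Q_i$: since $s=\min_j s_j\le s_i$ and $n=\max_j n_j\ge n_i$ for every $i$, each $Q_i$ is $(s,n)$-closed. The only thing to be careful about here is the direction of these inequalities: once $a^s\subseteq Q_i$ we may freely enlarge the exponent, because $a^{s_i}=a^{s}\circ a^{s_i-s}\subseteq Q_i$ as $Q_i$ absorbs hyperproducts, but we cannot shrink it, which is exactly why $s$ must be taken as the minimum and, dually, $n$ as the maximum.

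With the exponents made uniform the conclusion is immediate: if $a\in G$ satisfies $a^{s}\subseteq Q=\bigcap_{i=1}^{t}Q_i$, then $a^{s}\subseteq Q_i$ for each $i$, so $a^{n}\subseteq Q_i$ for each $i$ by the $(s,n)$-closedness of $Q_i$, whence $a^{n}\subseteq\bigcap_{i=1}^{t}Q_i=Q$. Therefore $Q$ is $(s,n)$-closed.

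I do not expect a genuine obstacle here: the whole argument is the \emph{uniformize-then-intersect} idea above, and the only subtlety is the bookkeeping with $\min$ and $\max$. Should the intended statement instead be phrased for the product $Q_1\circ\cdots\circ Q_t$ or for a sum of such hyperideals, the same two moves apply, the extra input being that $a^s\subseteq Q_1\circ\cdots\circ Q_t$ forces $a^s\subseteq Q_i$ for each $i$, exactly as in the proof of Theorem \ref{2}.
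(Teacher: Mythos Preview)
Your guess at the completion covers exactly part~(ii) of the theorem, and your argument for the intersection is correct and is essentially the paper's own: both pass from $a^s\subseteq Q_i$ to $a^{s_i}\subseteq Q_i$, invoke $(s_i,n_i)$-closedness to obtain $a^{n_i}\subseteq Q_i$, and then take $n=\max_i n_i$ (the paper records the bound as $n\ge\min\{s,\max_i n_i\}$, which merely folds in the trivial range $n\ge s$). Your ``uniformize-then-intersect'' description is a clean way to say it.

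The theorem, however, also has a part~(i) about the hyperproduct $Q_1\circ\cdots\circ Q_t$, and here your closing remark is too optimistic. The extra input you name---that $a^s\subseteq Q_1\circ\cdots\circ Q_t$ forces $a^s\subseteq Q_i$ for each $i$---is correct and gets you to $a^{n_i}\subseteq Q_i$ for each $i$. But the ``same two moves'' do \emph{not} finish the job: knowing $a^{n_i}\subseteq Q_i$ for every $i$ does not yield $a^{\max_i n_i}\subseteq Q_1\circ\cdots\circ Q_t$, because the product is not contained in the intersection in general. The paper instead multiplies, obtaining
\[
a^{n_1+\cdots+n_t}=a^{n_1}\circ\cdots\circ a^{n_t}\subseteq Q_1\circ\cdots\circ Q_t,
\]
so the correct bound for the product case is $n\ge\min\{s,\,n_1+\cdots+n_t\}$, genuinely weaker than in the intersection case. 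This additional multiplication step is the one substantive ingredient your sketch of the product case is missing.
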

\begin{itemize}
\item[\rm(i)]~ $Q_1 \circ \cdots \circ Q_t$ is $(s,n)$-closed for $s,n \in \mathbb{Z}^+$ such that $s \leq \min\{s_1,\cdots,s_t\}$ and $n \geq \min \{s,n_1+\cdots+n_t\}.$
\item[\rm(ii)]~$Q_1 \cap \cdots \cap Q_t$ is $(s,n)$-closed for $s,n \in \mathbb{Z}^+$ such that  $s \leq \min\{s_1,\cdots,s_t\}$ and $n \geq \min \{s,\max\{n_1,\cdots,n_t\}\}.$
\end{itemize}
\begin{proof}
$(i)$ Assume that $a^s \subseteq Q_1 \circ \cdots \circ Q_t$ for  $s \leq \min\{s_1,\cdots,s_t\}$ and $a \in G$. This means $a^s \subseteq Q_i$ for all $i \in \{1,\cdots,t\}$ and so $a^{s_i} \subseteq Q_i$. Since  $Q_i$ is $(s_i,n_i)$-closed, we get $a^{n_i} \subseteq Q_i$. Then we conclude that $a^{n_1} \circ \cdots \circ a^{n_t}=a^{n_1+\cdots+n_t} \subseteq Q_1 \circ \cdots \circ Q_t$. This implies that $a^n \subseteq Q_1 \circ \cdots \circ Q_t$ for $n_1 \cdots +n_t \leq n$. Consequently, we have $a^n \subseteq Q_1 \circ \cdots \circ Q_t$ for all $n \geq \min \{s,n_1+\cdots+n_t\} $.\\
$(ii)$ Suppose that $a^s \subseteq Q_1 \cap \cdots Q_t$ for $s \leq \min\{s_1,\cdots,s_t\}$ and $a \in G$. Hence $a^s \subseteq Q_i$ for all $i \in \{1,\cdots,t\}$ and then $a^{s_i} \subseteq Q_i$. Since $Q_i$ is $(s_i,n_i)$-closed, we obtain $a^{n_i} \subseteq Q_i$. Thus we have $a^n \subseteq Q_i$ for every $i \in \{1,\cdots,t\}$ and $n \geq \max\{n_1,\cdots,n_t\}.$ Hence $a^n \subseteq Q_1 \cap \cdots \cap Q_t$ for all $n \geq \min \{s,\max\{n_1,\cdots,n_t\}\}$.
\end{proof}

\begin{corollary}\label{4}
Let $Q_1, \cdots,Q_t$ be some $(s,n)$-closed hyperideals of $G$ for some $s,n \in \mathbb{Z}^+$. Then $Q_1 \cap \cdots \cap Q_t$ is an $(s,n)$-closed hyperideal of $G$.
\end{corollary}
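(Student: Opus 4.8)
The plan is to obtain this statement immediately as the special case $s_1=\cdots=s_t=s$ and $n_1=\cdots=n_t=n$ of Theorem~\ref{3}(ii). With these choices one has $\min\{s_1,\dots,s_t\}=s$, so the requirement $s\le\min\{s_1,\dots,s_t\}$ holds with equality; and $\max\{n_1,\dots,n_t\}=n$, so $\min\{s,\max\{n_1,\dots,n_t\}\}=\min\{s,n\}\le n$, which makes the numerical side condition $n\ge\min\{s,\max\{n_1,\dots,n_t\}\}$ automatically satisfied. Hence Theorem~\ref{3}(ii) applies verbatim and yields that $Q_1\cap\cdots\cap Q_t$ is $(s,n)$-closed.

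Alternatively, and perhaps more transparently, I would simply spell out the one-line direct argument: if $a^s\subseteq Q_1\cap\cdots\cap Q_t$ for some $a\in G$, then $a^s\subseteq Q_i$ for every $i\in\{1,\dots,t\}$, and since each $Q_i$ is $(s,n)$-closed this gives $a^n\subseteq Q_i$ for every $i$; intersecting, $a^n\subseteq Q_1\cap\cdots\cap Q_t$. This reuses exactly the reasoning already carried out inside the proof of Theorem~\ref{3}(ii).

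The only remaining point is to note that $Q_1\cap\cdots\cap Q_t$ is indeed a proper hyperideal of $G$: it is a hyperideal by the routine check of the two defining conditions (differences stay inside each $Q_i$, and $r\circ a\subseteq Q_i$ for all $i$ whenever $a$ lies in all the $Q_i$), and it is proper since it is contained in the proper hyperideal $Q_1$. I do not anticipate any genuine obstacle here; the statement is a routine corollary, and the only care needed is in observing that properness is inherited by intersections and that $\min\{s,n\}\le n$ renders the side condition of Theorem~\ref{3}(ii) vacuous.
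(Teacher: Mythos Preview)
Your proposal is correct and matches the paper's intent: Corollary~\ref{4} is stated without proof immediately after Theorem~\ref{3}, so the intended derivation is exactly the specialization $s_1=\cdots=s_t=s$, $n_1=\cdots=n_t=n$ of part~(ii) that you carry out, and your verification of the side conditions (together with the remark on properness) is accurate.
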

Recall from \cite{ameri} that two hyperideals $P$ and $Q$ of $G$ are coprime if $P+Q=G$.
\begin{corollary}\label{5}
Let $Q_1, \cdots,Q_t$ be some $(s,n)$-closed hyperideals of $G$ for some $s,n \in \mathbb{Z}^+$ such that $Q_1, \cdots, Q_t$ are pairwise coprime. Then $Q_1 \circ \cdots \circ Q_t$ is an $(s,n)$-closed hyperideal of $G$.
\end{corollary}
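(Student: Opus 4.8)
\emph{Proof proposal.} The plan is to reduce Corollary~\ref{5} to Corollary~\ref{4} by establishing the classical Chinese-remainder-type identity
$Q_1 \circ \cdots \circ Q_t = Q_1 \cap \cdots \cap Q_t$
under the pairwise coprimeness hypothesis. Once this is in hand, Corollary~\ref{4} immediately tells us that $Q_1 \cap \cdots \cap Q_t$ is $(s,n)$-closed, and hence so is $Q_1 \circ \cdots \circ Q_t$.

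First I would settle the two-hyperideal case. The inclusion $Q_1 \circ Q_2 \subseteq Q_1 \cap Q_2$ is immediate from hyperideal axiom (ii): for $q_1 \in Q_1$ and $q_2 \in Q_2$ we have $q_1 \circ q_2 = q_2 \circ q_1 \subseteq Q_1$ (taking $r = q_2 \in G$), and symmetrically $q_1 \circ q_2 \subseteq Q_2$. For the reverse inclusion I would use $Q_1 + Q_2 = G$ together with an identity element $e$ of $G$: writing $e = q_1 + q_2$ with $q_i \in Q_i$, then for any $a \in Q_1 \cap Q_2$,
$a \in a \circ e \subseteq a \circ q_1 + a \circ q_2 \subseteq Q_2 \circ Q_1 + Q_1 \circ Q_2 = Q_1 \circ Q_2$,
using $a \in Q_2$, $q_1 \in Q_1$ in the first summand, $a \in Q_1$, $q_2 \in Q_2$ in the second, and commutativity of $\circ$. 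Hence $Q_1 \cap Q_2 = Q_1 \circ Q_2$.

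Next I would run an induction on $t$. The auxiliary step is that if $Q_1,\dots,Q_t$ are pairwise coprime then $Q_1 \circ \cdots \circ Q_{t-1}$ and $Q_t$ are coprime: from $Q_j + Q_t = G$ for $j = 1,\dots,t-1$ and $G \circ \cdots \circ G = G$ (again obtained by inserting an identity), expanding the hyperproduct $(Q_1 + Q_t) \circ \cdots \circ (Q_{t-1}+Q_t)$ by the subdistributive law sends every summand other than $Q_1 \circ \cdots \circ Q_{t-1}$ into $Q_t$, so $G \subseteq Q_1 \circ \cdots \circ Q_{t-1} + Q_t$. Applying the two-hyperideal case to the pair $Q_1 \circ \cdots \circ Q_{t-1}$, $Q_t$ and then the induction hypothesis to $Q_1,\dots,Q_{t-1}$ gives $Q_1 \circ \cdots \circ Q_t = (Q_1 \circ \cdots \circ Q_{t-1}) \cap Q_t = Q_1 \cap \cdots \cap Q_t$, and Corollary~\ref{4} finishes the argument.

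The hard part will be the reverse inclusion in the product/intersection identity: since the distributivity of $\circ$ over $+$ in a multiplicative hyperring is only an inclusion and there is no cancellation, the argument genuinely hinges on being able to "insert" an identity via $a \in a\circ e$ and on the cross-terms being absorbed into the relevant hyperideals. If $G$ is not assumed to carry an identity element, this step—and therefore the corollary as stated—would require either an extra hypothesis or a different route; alternatively one may simply cite the coprime product/intersection identity from \cite{ameri} if it is available there.
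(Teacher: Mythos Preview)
Your proposal is correct and matches the paper's intended route: the paper states Corollary~\ref{5} without proof, placing it immediately after Corollary~\ref{4} and the reference to coprime hyperideals from \cite{ameri}, so the implicit argument is exactly the product--intersection identity $Q_1\circ\cdots\circ Q_t = Q_1\cap\cdots\cap Q_t$ for pairwise coprime hyperideals followed by Corollary~\ref{4}. Your caveat about needing an identity element (or citing the identity from \cite{ameri}) is well placed, since the paper does not make this hypothesis explicit.
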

\begin{theorem}\label{6}
Let $P$ and $Q$ be two hyperideals of $G$ such that $Q$ is an $(s,2)$-closed strong {\bf C}-hyperideal for $s \in \mathbb{Z}^+$. If $P^s \subseteq Q$, then $P^2+P^2 \subseteq Q$.
\end{theorem}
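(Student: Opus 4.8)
The plan is to first extract, from the hypothesis $P^{s}\subseteq Q$, that the square of every single element of $P$ lies in $Q$, and then to apply this to the element $a+b$ (for $a,b\in P$), using the subdistributive law together with the strong $\mathbf{C}$-hyperideal property to detach the mixed product $a\circ b$. For the first point, observe that if $a\in P$ then $a^{s}\subseteq P^{s}\subseteq Q$, so the $(s,2)$-closedness of $Q$ gives $a\circ a=a^{2}\subseteq Q$. Since $P$ is in particular an additive subgroup of $G$, for any $a,b\in P$ the sum $a+b$ again lies in $P$, and hence $(a+b)\circ(a+b)=(a+b)^{2}\subseteq Q$ by the same reasoning.

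Now fix $a,b\in P$ and pick any $t\in(a+b)\circ(a+b)$, so that $t\in Q$. Applying subdistributivity twice,
\[
t\in(a+b)\circ(a+b)\subseteq(a\circ a)+(a\circ b)+(b\circ a)+(b\circ b),
\]
so we may write $t=e_{1}+e_{2}+e_{3}+e_{4}$ with $e_{1}\in a\circ a$, $e_{2}\in a\circ b$, $e_{3}\in b\circ a$, $e_{4}\in b\circ b$. From the previous paragraph $e_{1},e_{4}\in Q$, and since $Q$ is an additive subgroup we obtain $e_{2}+e_{3}=t-e_{1}-e_{4}\in Q$. By commutativity $e_{3}\in b\circ a=a\circ b$, so $e_{2}+e_{3}\in(a\circ b)+(a\circ b)$, and therefore $\bigl((a\circ b)+(a\circ b)\bigr)\cap Q\neq\varnothing$. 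Since $a\circ b\in\mathbf{C}$, the set $(a\circ b)+(a\circ b)$ lies in $\mathfrak{U}$, so the assumption that $Q$ is a strong $\mathbf{C}$-hyperideal forces $(a\circ b)+(a\circ b)\subseteq Q$.

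Because $a,b\in P$ were arbitrary and $P^{2}=P\circ P=\bigcup_{a,b\in P}a\circ b$, we conclude $P^{2}+P^{2}\subseteq Q$, as desired. I expect the middle step to be the main obstacle: one must be sure that the subdistributive containment runs in precisely the direction needed, namely that any element of $(a+b)^{2}$ splits into contributions from $a\circ a$, $a\circ b$, $b\circ a$ and $b\circ b$; and it is essential that $Q$ be a \emph{strong} $\mathbf{C}$-hyperideal rather than merely a $\mathbf{C}$-hyperideal, since $(a\circ b)+(a\circ b)$ is a sum of two elements of $\mathbf{C}$ and need not itself belong to $\mathbf{C}$, so only the strong property lets us pass from ``$(a\circ b)+(a\circ b)$ meets $Q$'' to ``$(a\circ b)+(a\circ b)\subseteq Q$''.
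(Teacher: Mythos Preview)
Your proof is correct and follows essentially the same line as the paper's: deduce $a^{2},b^{2},(a+b)^{2}\subseteq Q$ from $(s,2)$-closedness, expand $(a+b)^{2}$ via subdistributivity to isolate a member of $(a\circ b)+(a\circ b)$ inside $Q$, and then invoke the strong $\mathbf{C}$-hyperideal condition to absorb the whole sum. Your write-up is in fact slightly more explicit than the paper's in justifying why $(a\circ b)+(a\circ b)\in\mathfrak{U}$ and hence why the \emph{strong} (rather than ordinary) $\mathbf{C}$-hyperideal hypothesis is needed.
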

\begin{proof}
Assume that $a, b \in P$. Since  $P^s \subseteq Q$, we have $a^s, b^s, (a+b)^s \subseteq Q$. Since $Q$ is an $(s,2)$-closed hyperideal of $G$, we get $a^2,b^2,(a+b)^2 \subseteq Q$. Pick $c \in (a+b)^2$.  Since $(a+b)^2 \subseteq a^2+a \circ b +a \circ b +b^2$, we have $c \in a^2+d+b^2$ for some $d \in a \circ b +a \circ b $ and so $c-d \in a^2+b^2  \subseteq Q$. Then we get $d \in Q$ as $c \in (a+b)^2 \subseteq Q$. Since $d \in a \circ b +a \circ b $ and $Q$ is a strong {\bf C}-hyperideal of $G$, we obtain $a \circ b +a \circ b \subseteq Q$. Hence $P^2+P^2 \subseteq Q$.
\end{proof}
We define the relation $\gamma$ on a multiplicative hyperring $(G, +, \circ)$ as follows:\\ 
$x \gamma y$ if and only if $\{x,y\} \subseteq A$ where $A$ is a finite sum of finite products of
elements of $G$. \\This means 
$x \gamma y $ if and only if there exist $ z_1, ... , z_n \in G$  that $\{x,y\} \subseteq \sum_{j \in J} \prod_{i \in I_j} z_i$ and $ I_j, J \subseteq \{1,... , n\}$.
 The transitive closure of $\gamma$ is denoted by $\gamma^{\ast}$. The relation $\gamma^{\ast}$ is the smallest equivalence relation on a multiplicative hyperring $G$ such that the
quotient $G/\gamma^{\ast}$, the set of all equivalence classes, is a fundamental ring. Let $\mathbb{A}$
be the set of all finite sums of products of elements of $G$. The
definition of $\gamma ^{\ast}$ can be rewritten on $G$, i.e.,
$x\gamma^{\ast}y$ if and only if  there exist  $ z_1, ... , z_n \in G$ with $z_1 = x, z_{n+1 }= y$ and $u_1, ... , u_n \in \mathbb{A}$ such that
$\{z_i, z_{i+1}\} \subseteq u_i$ for $i \in \{1, ... , n\}$.
Suppose that $\gamma^{\ast}(x)$ is the equivalence class containing $x \in G$. Define $\gamma ^{\ast}(x) \oplus \gamma^{\ast}(y)=\gamma ^{\ast}(z)$ for all $z \in \gamma^{\ast}(x) + \gamma ^{\ast}(y)$ and $\gamma ^{\ast}(x) \odot \gamma ^{\ast}(y)=\gamma ^{\ast}(w)$ for all $w \in  \gamma^{\ast}(x) \circ \gamma ^{\ast}(y)$. Then $(G/\gamma ^{\ast},+,\odot)$ is a ring, which is called a fundamental ring of $G$ \cite{sorc4}.

\begin{theorem}\label{7}
Let $Q$ be a hyperideal of $G$. Then  $Q$  is an $(s,n)$-closed hyperideal of $(G,+,\circ)$ if and only if $Q/\gamma ^{\ast}$ is  an $(s,n)$-closed ideal of $(G/\gamma ^{\ast},\oplus,\odot)$. 
\end{theorem}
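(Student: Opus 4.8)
The plan is to prove both implications by translating the divisibility/containment statements back and forth between $G$ and its fundamental ring $G/\gamma^{\ast}$, using the canonical projection $\pi \colon G \to G/\gamma^{\ast}$, $\pi(a) = \gamma^{\ast}(a)$. The crucial bookkeeping facts I would first record are: (1) $Q/\gamma^{\ast}$ is a genuine ideal of the ring $(G/\gamma^{\ast}, \oplus, \odot)$, and it is proper since $Q$ is proper; (2) $\pi$ is a multiplicative-hyperring homomorphism in the sense that $\pi(a \circ b) = \{\pi(a) \odot \pi(b)\}$ for all $a,b \in G$, and more generally, by induction using associativity of $\odot$, every element of the $s$-fold hyperproduct $a^s = a \circ \cdots \circ a$ maps under $\pi$ to the single element $\pi(a)^s$ (the $s$-th power in the ring); and (3) for a hyperideal $Q$, one has $a^s \subseteq Q$ in $G$ if and only if $\pi(a)^s \in Q/\gamma^{\ast}$ in $G/\gamma^{\ast}$ — the forward direction is immediate from (2), and the reverse uses that $x \in Q$ whenever $\gamma^{\ast}(x) \in Q/\gamma^{\ast}$, which is essentially the definition of $Q/\gamma^{\ast}$ together with $Q$ being a union of $\gamma^{\ast}$-classes (here one should note $Q$ is closed under the relation $\gamma$ because it is a hyperideal — this is the point that needs the finite-sums-of-finite-products structure $\mathbb{A}$ and will deserve a careful sentence).

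For the forward implication, assume $Q$ is $(s,n)$-closed in $G$. Take $\bar{a} \in G/\gamma^{\ast}$ with $\bar{a}^s \in Q/\gamma^{\ast}$; write $\bar a = \pi(a)$ for some $a \in G$. By fact (3), $a^s \subseteq Q$, so by hypothesis $a^n \subseteq Q$, and applying (2)–(3) again gives $\bar{a}^n = \pi(a)^n \in Q/\gamma^{\ast}$. Hence $Q/\gamma^{\ast}$ is $(s,n)$-closed. For the converse, assume $Q/\gamma^{\ast}$ is $(s,n)$-closed and suppose $a^s \subseteq Q$ for some $a \in G$. Then $\pi(a)^s \in Q/\gamma^{\ast}$ by (2), so $\pi(a)^n \in Q/\gamma^{\ast}$ by hypothesis, and then (3) in its reverse direction yields $a^n \subseteq Q$. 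This shows $Q$ is $(s,n)$-closed, completing the equivalence.

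The main obstacle — and the only place where the argument is more than formal — is establishing fact (3) cleanly, in particular the claim that $\gamma^{\ast}(x) \in Q/\gamma^{\ast}$ forces $x \in Q$, equivalently that $Q$ is a union of $\gamma^{\ast}$-classes. One has to check that if $y \in Q$ and $x \,\gamma^{\ast}\, y$, then $x \in Q$: unwinding the definition of $\gamma^{\ast}$, it suffices to handle one step of $\gamma$, i.e. $\{x,y\} \subseteq u$ for some $u = \sum_{j \in J}\prod_{i \in I_j} z_i \in \mathbb{A}$ with $y \in Q$; here I would argue that a hyperideal is a strong $\mathbf{C}$-type object in the relevant sense, so that $u \cap Q \ni y$ being nonempty while $u$ is a finite sum of finite hyperproducts forces $u \subseteq Q$, hence $x \in Q$. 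If the paper only assumes $Q$ is a hyperideal (not a strong $\mathbf C$-hyperideal), this step needs the observation that additive subgroups closed under multiplication by $G$ absorb such sums and products regardless; I would state this as a short lemma-style paragraph before the main argument. Once fact (3) is in place, everything else is a direct unwinding of definitions and the ring-homomorphism property of $\pi$, and the two implications are symmetric.
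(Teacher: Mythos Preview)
Your core argument is identical to the paper's: both directions proceed by lifting or projecting through $\pi$ and using that every element of $a^k$ has image $\pi(a)^k$. The paper simply asserts the equivalence you call fact~(3) --- that $\gamma^{\ast}(x^k)\in Q/\gamma^{\ast}$ if and only if $x^k\subseteq Q$ --- without comment, in both directions, and otherwise the two proofs are line-for-line the same.

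You are right to isolate fact~(3) as the only nontrivial step, and right that it amounts to $Q$ being a union of $\gamma^{\ast}$-classes. However, your proposed justification --- that any additive subgroup closed under ambient hypermultiplication must absorb every finite sum of finite hyperproducts that meets it --- is false for general hyperideals. In the hyperring $(\mathbb{Z},+,\circ)$ with $a\circ b=\{2ab,4ab\}$ (the paper's own running example), the set $4\mathbb{Z}$ is a hyperideal, yet $1\circ 1=\{2,4\}$ meets $4\mathbb{Z}$ without being contained in it; thus $2\,\gamma^{\ast}\,4$ while $4\in 4\mathbb{Z}$ and $2\notin 4\mathbb{Z}$. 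So $\gamma^{\ast}$-saturation genuinely requires a (strong) $\mathbf{C}$-hyperideal hypothesis, and your ``short lemma-style paragraph'' cannot close the gap as stated. That said, the paper's own proof has exactly the same gap --- it simply passes over this point in silence --- so your write-up is no less rigorous than the original and is in fact more candid about where the difficulty lies.
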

\begin{proof}
($\Longrightarrow$) Let $\underbrace{ a \odot \cdots \odot a}_s \in Q/\gamma ^{\ast}$  for some $a \in G/\gamma ^{\ast}$. Therefore there exists $x\in G$ such
that $a =\gamma^{\ast}(x)$ and $\underbrace{a \odot \cdots \odot a}_s= \underbrace{\gamma^{\ast}(x) \odot \cdots \odot \gamma^{\ast}(x)}_s=\gamma^{\ast}(x^s)$. Since $
\gamma^{\ast}(x^s) \in Q/\gamma^{\ast}$, we obtain $x^s\subseteq Q$. Since $Q$ is an $(s,n)$-closed hyperideal of $G$, we conclude that  $x^n \subseteq Q$. Thus $\underbrace{a \odot \cdots \odot a}_n=\underbrace{ \gamma^{\ast}(x) \odot \cdots \odot \gamma^{\ast}(x)}_n=\gamma^{\ast}(x^n)\in Q/\gamma ^{\ast}$ which implies  $Q/\gamma ^{\ast}$ is an $(s,n)$-closed  ideal of $G/\gamma ^{\ast}$.\\
($\Longleftarrow$) Assume that $x^s \subseteq Q$ for some $x \in G$. Therefore $\gamma^{\ast}(x) \in G/\gamma^{\ast} $ and
$\underbrace{\gamma^{\ast}(x) \odot \cdots  \odot\gamma^{\ast}(x)}_s= \gamma^{\ast}(x^s) \in Q/\gamma^{\ast}$. Since $Q/\gamma ^{\ast}$ is an $(s,n)$-closed ideal of $G/\gamma ^{\ast}$,  we get $\underbrace{\gamma^{\ast}(x) \odot \cdots \odot \gamma^{\ast}(x)}_n= \gamma^{\ast}(x^n)\in Q/\gamma^{\ast}$ which  means $x^n \subseteq Q$. Consequently $Q$ is an $(s,n)$-closed  hyperideal of $G$.
\end{proof}

 Let us define $\mathfrak{C}(Q)=\{(s,n) \in \mathbb{Z}^+ \times \mathbb{Z}^+ \ \vert \ Q \text{ is (s,n)-closed }\}$ for some proper hyperideal $Q$ of $G$. Then we have $\{(s,n) \in \mathbb{Z}^+ \times \mathbb{Z}^+ \ \vert \ 1 \leq s \leq n \} \subseteq \mathfrak{C}(Q) \subseteq \mathbb{Z}^+ \times \mathbb{Z}^+$.  Moreover, $rad(Q)=Q$ if and only if $\mathfrak{C}(Q)=\mathbb{Z}^+ \times \mathbb{Z}^+$.

 \begin{theorem}\label{9}
 Let $Q$ be a proper hyperideal of $G$ and $s,n \in \mathbb{Z}^+$. If $(s,n), (s+1,n+1) \in \mathfrak{C}(Q)$ with $s \neq n$, then $(s+1,n) \in \mathfrak{C}(Q)$.
 \end{theorem}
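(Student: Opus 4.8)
The plan is to split on the sign of $s-n$. Since $s \neq n$ by hypothesis, either $s<n$ or $s>n$, and the two cases behave very differently.

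If $s<n$, then $s+1\le n$, so the pair $(s+1,n)$ already lies in the region $\{(s,n)\in\mathbb{Z}^+\times\mathbb{Z}^+ \ \vert\ 1\le s\le n\}\subseteq\mathfrak{C}(Q)$ recorded just before the statement, and there is nothing to prove. Concretely: for $a\in G$ with $a^{s+1}\subseteq Q$ one has $a^{n}=a^{s+1}\circ a^{\,n-s-1}\subseteq Q$ (the filler power $a^{\,n-s-1}$ being absent when $n=s+1$, in which case $a^n=a^{s+1}\subseteq Q$ directly), because $Q$ is a hyperideal and hence absorbs $\circ$-products.

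The substantive case is $s>n$, i.e. $s\ge n+1$. Take $a\in G$ with $a^{s+1}\subseteq Q$. Since $(s+1,n+1)\in\mathfrak{C}(Q)$, $(s+1,n+1)$-closedness yields $a^{n+1}\subseteq Q$. Now I use $s\ge n+1$: writing $a^{s}=a^{n+1}\circ a^{\,s-n-1}$ (and $a^{s}=a^{n+1}$ when $s=n+1$), the inclusion $a^{n+1}\subseteq Q$ together with the hyperideal absorption property gives $a^{s}\subseteq Q$. Finally, since $(s,n)\in\mathfrak{C}(Q)$, $(s,n)$-closedness gives $a^{n}\subseteq Q$. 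As $a$ was arbitrary, $Q$ is $(s+1,n)$-closed, so $(s+1,n)\in\mathfrak{C}(Q)$.

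I do not expect a genuine obstacle here. The mechanism is: $(s+1,n+1)$-closedness pushes the exponent down from $s+1$ to $n+1$, the hyperideal property pushes it back up from $n+1$ to $s$ (this step is legal precisely because $s\ge n+1$, which is exactly where the hypothesis $s\neq n$ is used), and then $(s,n)$-closedness brings it down to $n$. The only points requiring a little care are the boundary subcase $s=n+1$, where the filler power is absent, and the bookkeeping observation that the case $s<n$ is trivial/vacuous.
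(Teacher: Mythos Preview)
Your proof is correct and follows essentially the same approach as the paper: both split into the trivial case $s<n$ and the substantive case $s>n$, and in the latter use $(s+1,n+1)$-closedness to pass from $a^{s+1}\subseteq Q$ to $a^{n+1}\subseteq Q$, then the hyperideal absorption (via $n+1\le s$) to get $a^s\subseteq Q$, and finally $(s,n)$-closedness to conclude $a^n\subseteq Q$. Your version is slightly more explicit about the absorption step and the boundary subcase $s=n+1$, but the argument is the same.
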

 \begin{proof}
 Let $s <n$. Clearly, we have $(s+1,n) \in \mathfrak{C}(Q)$. Now, let us conseder $s>n$. Assume that $a^{s+1} \subseteq Q$ for some $a \in G$. Since $(s+1,n+1) \in \mathfrak{C}(Q)$, we get $a^{n+1} \subseteq Q $. Since $n+1 \leq s$, we obtain $a^s \subseteq Q$ which implies $a^n \subseteq Q$ as $(s,n) \in \mathfrak{C}(Q)$. Hence $(s+1,n) \in \mathfrak{C}(Q)$.
 \end{proof}
  \begin{lem} \label{10}
 Let $Q$ be a proper hyperideal of $G$ and $s,n \in \mathbb{Z}^+$. If $(s,n) \in \mathfrak{C}(Q)$, then $(s^{\prime},n^{\prime}) \in \mathfrak{C}(Q)$ for $s^{\prime},n^{\prime} \in \mathbb{Z}^+$ such that $1 \leq s^{\prime} \leq s$ and $n \leq n^{\prime}$.
 \end{lem}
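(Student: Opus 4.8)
The plan is to reduce the statement entirely to the defining property of $(s,n)$-closedness together with the absorption axiom for hyperideals; this is essentially the precise form of the remark made informally right after the definition, now phrased in terms of $\mathfrak{C}(Q)$.

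First I would record an elementary monotonicity observation about powers inside a hyperideal: if $a \in G$ and $a^k \subseteq Q$ for some $k \in \mathbb{Z}^+$, then $a^m \subseteq Q$ for every integer $m \geq k$. Indeed, if $m = k$ there is nothing to prove, and if $m > k$ then associativity of the semihypergroup $(G,\circ)$ lets us write $a^m = a^k \circ a^{m-k}$. Every element of this set has the form $x \circ y$ with $x \in a^k \subseteq Q$ and $y \in a^{m-k} \subseteq G$; since $Q$ is a hyperideal we have $y \circ x \subseteq Q$, and by commutativity $x \circ y = y \circ x \subseteq Q$. Hence $a^m = a^k \circ a^{m-k} \subseteq Q$.

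With this in hand the argument is immediate. Assume $(s,n) \in \mathfrak{C}(Q)$, and let $s',n' \in \mathbb{Z}^+$ with $1 \leq s' \leq s$ and $n \leq n'$. To verify $(s',n') \in \mathfrak{C}(Q)$, take any $a \in G$ with $a^{s'} \subseteq Q$. Applying the observation with $k = s'$ and $m = s$ gives $a^s \subseteq Q$; since $Q$ is $(s,n)$-closed this yields $a^n \subseteq Q$; applying the observation once more with $k = n$ and $m = n'$ gives $a^{n'} \subseteq Q$. Thus $Q$ is $(s',n')$-closed, i.e. $(s',n') \in \mathfrak{C}(Q)$.

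I do not expect any genuine obstacle here; the only point that needs a little care is the handling of iterated hyperproducts — namely justifying the identity $a^m = a^k \circ a^{m-k}$ from associativity of the semihypergroup and deducing $a^k \circ a^{m-k} \subseteq Q$ from the hyperideal absorption axiom (and commutativity), rather than invoking any ring-theoretic identity that need not hold in a multiplicative hyperring.
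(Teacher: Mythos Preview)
Your proof is correct and is precisely the natural unpacking of the paper's own proof, which consists of the single word ``Straightforward.'' The monotonicity step $a^k \subseteq Q \Rightarrow a^m \subseteq Q$ for $m \geq k$ via $a^m = a^k \circ a^{m-k} \subseteq Q$ is exactly the intended content, and your care with the case $m=k$ and with the hyperideal absorption axiom is appropriate.
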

 \begin{proof}
Straightforward.
 \end{proof}
 \begin{theorem}\label{11}
  Let $Q$ be a proper ${\bf C}-$hyperideal of $G$.
\begin{itemize}
\item[\rm(i)]~ If $(n,2), (n+1,2) \in \mathfrak{C}(Q)$ for $n \in \mathbb{Z}^+$ such that $n  \geq 3$, then $(n+2,2) \in \mathfrak{C}(Q)$ and then $(t,2) \in \mathfrak{C}(Q)$ for all $t \in \mathbb{Z}^+$.
\item[\rm(ii)]~ If $(s,n) \in \mathfrak{C}(Q)$ for $s,n \in \mathbb{Z}^+$ such that $n \leq \frac{s}{2}$, then $(s+1,n) \in \mathfrak{C}(Q)$ and then $(t,n) \in \mathfrak{C}(Q)$ for all $t \in \mathbb{Z}^+$.
\end{itemize}
 \end{theorem}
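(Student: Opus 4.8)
The plan is to push the problem to the fundamental ring and reduce both parts to an elementary manipulation of exponents of a single element. By Theorem~\ref{7}, a proper hyperideal $Q$ of $G$ is $(s,n)$-closed if and only if the ideal $Q/\gamma^{\ast}$ of the fundamental ring $R:=G/\gamma^{\ast}$ is $(s,n)$-closed, so $\mathfrak{C}(Q)=\mathfrak{C}(Q/\gamma^{\ast})$ (writing $\mathfrak{C}(I)$ for the analogous set of pairs attached to an ideal $I$ of a ring). Hence it is enough to prove, for the proper ideal $I:=Q/\gamma^{\ast}$ of the commutative ring $R$: (a) if $(n,2),(n+1,2)\in\mathfrak{C}(I)$ and $n\geq 3$, then $(n+2,2)\in\mathfrak{C}(I)$; and (b) if $(s,n)\in\mathfrak{C}(I)$ and $2n\leq s$, then $(s+1,n)\in\mathfrak{C}(I)$. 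This reduction is the real content: in $G$ the symbol $a^{2}$ denotes the subset $a\circ a$, so a fact like ``$a^{2n+2}\subseteq Q$'' cannot be inserted directly into the $(n+1,2)$-closedness of $Q$, which speaks only of powers of single elements; once in $R$, $a^{2}$ is an honest element and this obstruction vanishes. One can also attempt a direct argument in $G$, using $rad(Q)=\{c\in G:\ c^{k}\subseteq Q\text{ for some }k\}$ (valid since $Q$ is a ${\bf C}$-hyperideal) and splitting on the least $k$ with $a^{k}\subseteq Q$; but the borderline case $k=n+2$ (resp. $k=s+1$) is exactly where the element-versus-subset issue bites, so the fundamental-ring route is the cleaner one.

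For (a), I would argue as follows. Suppose $a^{n+2}\in I$. Since $I$ is an ideal, $a^{m}\in I$ for every $m\geq n+2$; in particular $a^{2n+2}=(a^{2})^{n+1}\in I$, so applying $(n+1,2)\in\mathfrak{C}(I)$ to the element $a^{2}$ gives $a^{4}=(a^{2})^{2}\in I$. Because $n\geq 3$ we have $n+1\geq 4$, hence $a^{n+1}=a^{4}\cdot a^{n-3}\in I$, and a second application of $(n+1,2)\in\mathfrak{C}(I)$ yields $a^{2}\in I$; by Theorem~\ref{7} this says $(n+2,2)\in\mathfrak{C}(Q)$. The assertion that then $(t,2)\in\mathfrak{C}(Q)$ for all $t$ follows by induction: the implication just proved upgrades $(m,2),(m+1,2)$ to $(m+2,2)$ for every $m\geq 3$, so from $(n,2),(n+1,2)$ we reach $(t,2)$ for all $t\geq n$, while $(t,2)$ for $1\leq t<n$ is immediate from $(n,2)$ by Lemma~\ref{10}.

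For (b) the pattern is identical. Suppose $a^{s+1}\in I$. Since $2s\geq s+1$ we have $a^{2s}=(a^{2})^{s}\in I$, so $(s,n)\in\mathfrak{C}(I)$ applied to the element $a^{2}$ gives $a^{2n}=(a^{2})^{n}\in I$; because $2n\leq s$, $a^{s}=a^{2n}\cdot a^{s-2n}\in I$ (with $a^{s}=a^{2n}\in I$ already when $2n=s$), and one more application of $(s,n)\in\mathfrak{C}(I)$ gives $a^{n}\in I$, i.e. $(s+1,n)\in\mathfrak{C}(I)$ and hence $(s+1,n)\in\mathfrak{C}(Q)$. Iterating — the hypothesis $2n\leq s$ is preserved under $s\mapsto s+1$ — together with Lemma~\ref{10} for $t\leq s$, yields $(t,n)\in\mathfrak{C}(Q)$ for all $t$. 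I expect the main obstacle to be precisely the set-up in the first paragraph (legitimising the treatment of $a^{2}$ as an element by passing to $R$); after that the numerical hypotheses do exactly what is needed — $n\geq 3$ lets one write $a^{n+1}=a^{4}\cdot a^{n-3}$ with a non-negative exponent, and $2n\leq s$ lets one write $a^{s}=a^{2n}\cdot a^{s-2n}$ — so the computations are routine.
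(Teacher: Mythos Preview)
Your argument is correct and shares the same exponent arithmetic with the paper's, but the two proofs handle the ``$a^{2}$ is a subset, not an element'' obstacle in genuinely different ways. You dissolve it by invoking Theorem~\ref{7} to pass to the fundamental ring $R=G/\gamma^{\ast}$, where $a^{2}$ is an honest element and the manipulations are routine; the paper instead stays inside $G$ and, for the given $a$, picks a single $x\in a^{2}$, applies the relevant closedness to the element $x$ (e.g.\ $x^{n}\subseteq a^{2n}\subseteq Q$, hence $x^{2}\subseteq Q$ by $(n,2)$), and then uses the ${\bf C}$-hyperideal hypothesis on the finite product $a^{4}$: from $x^{2}\subseteq a^{4}$ and $x^{2}\subseteq Q$ one gets $a^{4}\cap Q\neq\varnothing$, whence $a^{4}\subseteq Q$. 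Thus the paper's route explains exactly where the ${\bf C}$-hypothesis enters, whereas your route never touches it --- as written, your proof would yield Theorem~\ref{11} for an arbitrary proper hyperideal, the extra hypothesis being hidden inside whatever is needed to make $Q/\gamma^{\ast}$ behave as in Theorem~\ref{7}. A minor variation in part~(i): you apply $(n+1,2)$ twice (first to $a^{2}$, then to $a$), while the paper first applies $(n,2)$ to the element $x\in a^{2}$ and only afterwards $(n+1,2)$ to $a$; both sequences reach $a^{4}\subseteq Q$ and then $a^{2}\subseteq Q$, and the induction and the appeal to Lemma~\ref{10} for small $t$ are the same.
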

 \begin{proof}
 (i) Let $a^{n+2} \subseteq Q$ for some $a \in G$. Since $2n \geq n+2$, we have $(a^2)^n=a^{2n} \subseteq Q$. Pick $x \in a^2$.  Since $x^n \subseteq Q$ and $(n,2) \in \mathfrak{C}(Q)$, we get $x^2 \subseteq Q$. From $x \in a^2$ it follows that $x^2 \subseteq a^4$. Since $Q$ is a  ${\bf C}-$hyperideal of $G$, we conclude that $a^4 \subseteq Q$. Since $(n+1,2) \in \mathfrak{C}(Q)$ and $n \geq 3$, we obtain $a^2 \subseteq Q$. Consequently, $(n+2,2) \in \mathfrak{C}(Q)$. By a similar argument, we conclude that $(t,2) \in \mathfrak{C}(Q)$ for all $t \in \mathbb{Z}^+$ such that $t \geq n+3$. Thus, by Lemma \ref{10}, $(t,2) \in \mathfrak{C}(Q)$ for all $t \in \mathbb{Z}^+$.
 
 (ii) Assume that $a^{s+1} \subseteq Q$ for some $a \in G$. Therefore we have $(a^2)^s=a^{2s} \subseteq Q$. Choose $x \in a^2$. So $x^s \subseteq Q$. Since $(s,n) \in \mathfrak{C}(Q)$, we get $x^n \subseteq Q$. From  $x \in a^2$ it follows that $x^n \subseteq a^{2n}$. Then we have $a^{2n} \subseteq Q$ as $Q$ is a  ${\bf C}-$hyperideal of $G$. By the hypothesis, we have  $a^s \subseteq Q$. Since  $(s,n) \in \mathfrak{C}(Q)$,  we get $a^n \subseteq Q$ which implies $(s+1,n) \in \mathfrak{C}(Q)$. By a similar argument, we have $(t,n) \in \mathfrak{C}(Q)$ for all $t \in \mathbb{Z}^+$ such that $t \geq n$. Now, by Lemma \ref{10}, we conclude that $(t,n) \in \mathfrak{C}(Q)$ for all $t \in \mathbb{Z}^+$.
 \end{proof}
 Assume that $Q$ is a proper hyperideal of $G$ and $s,n \in \mathbb{Z}^+$. We consider the mappings $\omega: \mathbb{Z}^+ \longmapsto \mathbb{Z}^+$, defined by $\omega_Q(s)=\min \{n \ \vert \  Q \text{ is (s,n)-closed}\} \in \{1,\cdots,s\}$,  and $\Omega: \mathbb{Z}^+ \longmapsto \mathbb{Z}^+ \cup \{\infty\}$, defined by $\Omega_Q(n)=\sup \{s \ \vert \  Q \text{ is (s,n)-closed}\} \in \{n,n+1,\cdots\} \cup \{\infty\}$. The rows of $\mathfrak{C}(Q)$ determine $\omega_Q$ and the columns of $\mathfrak{C}(Q)$ determine $\Omega_Q$. Since $(n,n) \in \mathfrak{C}(Q)$ for all $n \in \mathbb{Z}^+$, we have $1 \leq \omega_Q(s) \leq s$. Moreover, by Lemma \ref{10}, we conclude  that $\omega_Q(s) \leq \omega_Q(s+1)$ and $\Omega_Q(n) \leq \Omega_Q(n+1)$.
 \begin{remark} \label{0}
 Let $P$ and $Q$ be two proper hyperideals of $G$. It can be easily seen that $\mathfrak{C}(P) \subseteq \mathfrak{C}(Q)$ for  if and only if $\omega_P(s) \leq \omega_Q(s)$ for all $s \in  \mathbb{Z}^+$ if and only if $\Omega_P(n) \leq  \Omega_Q(n)$ for all $n \in  \mathbb{Z}^+$.
  \end{remark} 
 \begin{theorem} \label{12}
 Let $Q$ be a proper hyperideal of $G$ and $s,n \in \mathbb{Z}^+$. If $\omega_Q(s) <s$, then either $\omega_Q(s+1)=\omega_Q(s)$ or $\omega_Q(s+1) \geq \omega_Q(s)+2$.
 \end{theorem}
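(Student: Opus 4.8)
The plan is to write $m = \omega_Q(s)$, so that by definition $Q$ is $(s,m)$-closed, i.e. $(s,m) \in \mathfrak{C}(Q)$, and by hypothesis $m < s$. Since $\omega_Q$ is nondecreasing (as noted after Lemma \ref{10}, a consequence of Lemma \ref{10}), we already have $\omega_Q(s+1) \geq m$. Hence the only value that needs to be excluded is $\omega_Q(s+1) = m+1$; once that case is ruled out, the trichotomy $\omega_Q(s+1) \in \{m\} \cup \{m+2, m+3, \dots\}$ is exactly the claimed dichotomy.

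So I would argue by contradiction: suppose $\omega_Q(s+1) = m+1$. By the definition of $\omega_Q$, this forces $(s+1, m+1) \in \mathfrak{C}(Q)$ while $(s+1, m) \notin \mathfrak{C}(Q)$. Now I have simultaneously $(s,m) \in \mathfrak{C}(Q)$ and $(s+1, m+1) \in \mathfrak{C}(Q)$, and the side condition $s \neq m$ holds because $m < s$. These are precisely the hypotheses of Theorem \ref{9} (with $n$ there equal to $m$), which yields $(s+1, m) \in \mathfrak{C}(Q)$. This contradicts $(s+1,m) \notin \mathfrak{C}(Q)$, and the contradiction shows $\omega_Q(s+1) \neq m+1$.

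Combining $\omega_Q(s+1) \geq m$ with $\omega_Q(s+1) \neq m+1$ gives either $\omega_Q(s+1) = m = \omega_Q(s)$ or $\omega_Q(s+1) \geq m+2 = \omega_Q(s)+2$, which is the assertion. I do not anticipate any real obstacle here; the proof is essentially an application of Theorem \ref{9} together with the monotonicity of $\omega_Q$, and the only point requiring a moment's care is verifying that the hypothesis $\omega_Q(s) < s$ is exactly what supplies the side condition $s \neq m$ needed to invoke Theorem \ref{9}.
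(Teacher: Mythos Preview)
Your proof is correct and follows essentially the same approach as the paper: both argue by contradiction from $\omega_Q(s+1)=\omega_Q(s)+1$, invoke Theorem~\ref{9} (using $\omega_Q(s)<s$ to ensure $s\neq n$) to obtain $(s+1,\omega_Q(s))\in\mathfrak{C}(Q)$, and derive the contradiction $\omega_Q(s+1)\leq \omega_Q(s)$. Your write-up is slightly more explicit in citing the monotonicity $\omega_Q(s)\leq\omega_Q(s+1)$ to finish the dichotomy, which the paper leaves implicit.
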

 \begin{proof}
 Let $\omega_Q(s+1) = \omega_Q(s)+1$ and $\omega_Q(s)=n$. Then $n<s$ and $\omega_Q(s+1) =n+1$. Hence we have $(s+1,n+1), (s,n) \in \mathfrak{C}(Q)$. By Theorem \ref{9}, we conclude that $(s+1,n) \in \mathfrak{C}(Q)$ which implies $\omega_Q(s+1) \leq n$ which is a contradiction.
 \end{proof}
 \begin{theorem} \label{13}
 Let $Q$ be a proper hyperideal of $G$ and $s,n \in \mathbb{Z}^+$. If $\Omega_Q(n) >n$, then either $\Omega_Q(n+1)=\Omega_Q(n)$ or $\Omega_Q(n+1) \geq \Omega_Q(n)+2$.
 \end{theorem}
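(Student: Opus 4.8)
The plan is to mimic the proof of Theorem~\ref{12} almost verbatim, with the rows of $\mathfrak{C}(Q)$ replaced by its columns and with Theorem~\ref{9} again doing the essential work. I would argue by contradiction: assuming $\Omega_Q(n+1)=\Omega_Q(n)+1$, I want to produce a pair in $\mathfrak{C}(Q)$ that is incompatible with the very definition of $\Omega_Q(n)$.

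First I would dispatch the degenerate case $\Omega_Q(n)=\infty$. Since $\Omega_Q$ is nondecreasing (recorded right after Remark~\ref{0}, via Lemma~\ref{10}), $\Omega_Q(n+1)\ge\Omega_Q(n)=\infty$, so $\Omega_Q(n+1)=\Omega_Q(n)$ and there is nothing to prove. Hence I may assume $s:=\Omega_Q(n)$ is a positive integer, and the hypothesis $\Omega_Q(n)>n$ gives $s>n$, in particular $s\neq n$. Next I would note the elementary structural fact that each column of $\mathfrak{C}(Q)$ is downward closed in the first coordinate: by Lemma~\ref{10}, if $(s',n)\in\mathfrak{C}(Q)$ and $1\le s''\le s'$ then $(s'',n)\in\mathfrak{C}(Q)$. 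Consequently $\Omega_Q(n)=s$ is equivalent to the conjunction of $(s,n)\in\mathfrak{C}(Q)$ and $(s+1,n)\notin\mathfrak{C}(Q)$, and likewise the assumption $\Omega_Q(n+1)=s+1$ forces $(s+1,n+1)\in\mathfrak{C}(Q)$.

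The heart of the argument is then a one-line application of Theorem~\ref{9}: from $(s,n)\in\mathfrak{C}(Q)$, $(s+1,n+1)\in\mathfrak{C}(Q)$ and $s\neq n$ we obtain $(s+1,n)\in\mathfrak{C}(Q)$, contradicting $(s+1,n)\notin\mathfrak{C}(Q)$. Therefore $\Omega_Q(n+1)\neq\Omega_Q(n)+1$, and combining this with the monotonicity $\Omega_Q(n+1)\ge\Omega_Q(n)$ yields exactly the stated dichotomy. The only place that needs any care — and the only point at which a careless write-up could fail — is the bookkeeping around $\infty$: one must peel off the case $\Omega_Q(n)=\infty$ at the outset so that $s$ is an honest positive integer eligible for Theorem~\ref{9}; apart from that, the proof is a direct transcription of the one for Theorem~\ref{12}.
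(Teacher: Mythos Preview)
Your proposal is correct and follows exactly the approach the paper indicates, namely transcribing the proof of Theorem~\ref{12} to the column function $\Omega_Q$ and invoking Theorem~\ref{9} for the contradiction. Your explicit handling of the case $\Omega_Q(n)=\infty$ and the observation that the columns of $\mathfrak{C}(Q)$ are downward closed (so that the supremum is attained) are useful clarifications that the paper leaves implicit.
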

 \begin{proof}
 By using an argument similar to that in the proof of Theorem \ref{12},
one can easily complete the proof.
 \end{proof}
 \begin{theorem}\label{14}
 Let $P$ and $Q$ be  proper hyperideals of $G$. Then 
 \begin{itemize}
\item[\rm(i)]~$\omega_{P\cap Q} \leq \omega_P \vee \omega_Q$.
\item[\rm(ii)]~$\Omega_P \wedge \Omega_Q \leq \Omega_{P\cap Q}$.
 \end{itemize}
 \end{theorem}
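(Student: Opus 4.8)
The plan is to phrase everything in terms of the set $\mathfrak{C}(\cdot)\subseteq\mathbb{Z}^+\times\mathbb{Z}^+$, whose rows encode $\omega$ and whose columns encode $\Omega$. The one new ingredient I need is the inclusion $\mathfrak{C}(P)\cap\mathfrak{C}(Q)\subseteq\mathfrak{C}(P\cap Q)$: if $(s,n)$ lies in both $\mathfrak{C}(P)$ and $\mathfrak{C}(Q)$, then $P$ and $Q$ are both $(s,n)$-closed, so by Corollary \ref{4} so is $P\cap Q$, i.e.\ $(s,n)\in\mathfrak{C}(P\cap Q)$. Everything else is bookkeeping built on Lemma \ref{10}.

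For (i), fix $s\in\mathbb{Z}^+$. By Lemma \ref{10} the $s$-th row $\{n\mid(s,n)\in\mathfrak{C}(P)\}$ equals $\{n\mid n\geq\omega_P(s)\}$, and similarly for $Q$; hence the $s$-th row of $\mathfrak{C}(P)\cap\mathfrak{C}(Q)$ is $\{n\mid n\geq\max\{\omega_P(s),\omega_Q(s)\}\}$. Combined with $\mathfrak{C}(P)\cap\mathfrak{C}(Q)\subseteq\mathfrak{C}(P\cap Q)$, this gives $\bigl(s,\max\{\omega_P(s),\omega_Q(s)\}\bigr)\in\mathfrak{C}(P\cap Q)$, so $\omega_{P\cap Q}(s)\leq\max\{\omega_P(s),\omega_Q(s)\}=(\omega_P\vee\omega_Q)(s)$. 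As $s$ was arbitrary, this is (i).

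For (ii), fix $n\in\mathbb{Z}^+$. Again by Lemma \ref{10} the $n$-th column $\{s\in\mathbb{Z}^+\mid(s,n)\in\mathfrak{C}(P)\}$ is downward closed in $s$, hence equals $\{s\in\mathbb{Z}^+\mid s\leq\Omega_P(n)\}$, where the right-hand side is read as all of $\mathbb{Z}^+$ when $\Omega_P(n)=\infty$ (when $\Omega_P(n)$ is finite the supremum is attained, since a bounded downward-closed subset of $\mathbb{Z}^+$ is a finite initial segment). Likewise for $Q$, so the $n$-th column of $\mathfrak{C}(P)\cap\mathfrak{C}(Q)$ is $\{s\mid s\leq\min\{\Omega_P(n),\Omega_Q(n)\}\}$, with supremum $\min\{\Omega_P(n),\Omega_Q(n)\}$. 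Using $\mathfrak{C}(P)\cap\mathfrak{C}(Q)\subseteq\mathfrak{C}(P\cap Q)$ once more, the $n$-th column of $\mathfrak{C}(P\cap Q)$ contains this set, so $\Omega_{P\cap Q}(n)\geq\min\{\Omega_P(n),\Omega_Q(n)\}=(\Omega_P\wedge\Omega_Q)(n)$, which is (ii).

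The only point needing care is the case $\Omega_P(n)=\infty$ together with the claim that a finite supremum over a column is attained; both are handled directly by the downward-closedness supplied by Lemma \ref{10}, so I expect no genuine obstacle beyond this routine bookkeeping.
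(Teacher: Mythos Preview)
Your proof is correct and follows essentially the same route as the paper: both arguments hinge on the inclusion $\mathfrak{C}(P)\cap\mathfrak{C}(Q)\subseteq\mathfrak{C}(P\cap Q)$ (from Corollary~\ref{4}) together with Lemma~\ref{10} to place $(s,\max\{\omega_P(s),\omega_Q(s)\})$ in $\mathfrak{C}(P\cap Q)$. The paper's version is terser and leaves (ii) to the reader as analogous, whereas you spell out the column argument and the $\Omega=\infty$ case explicitly; but there is no substantive difference in strategy.
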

 \begin{proof}
 Put $ \omega_P(s)=n^{\prime}$ and  $\omega_Q(s)=n^{\prime \prime}$ for  $s \in \mathbb{Z}^+$ such that $n=\max \{n^{\prime}, n^{\prime \prime}\}.$ By Lemma \ref{10} and Corollary \ref{4}, we have $(s,n) \in \mathfrak{C}(P) \cap \mathfrak{C}(Q) \subseteq \mathfrak{C}(P \cap Q)$.  Therefore we conclude that $\omega_{P\cap Q}(s) \leq n$. Since $n = (\omega_P \vee \omega_Q)(s)$, we have $\omega_{P\cap Q}(s) \leq (\omega_P \vee \omega_Q)(s)$.
 
 (ii) This can be proved in a very similar manner to the way in
which (i) was proved.
 \end{proof}
 \begin{theorem} \label{15}
 Let $P$ and $Q$ be  proper hyperideals of $G$. Then $\omega_{P\cap Q} = \omega_P \vee \omega_Q$ if and only if $\mathfrak{C}(P) \cap \mathfrak{C}(Q) = \mathfrak{C}(P \cap Q)$.
 \end{theorem}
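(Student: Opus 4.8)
The plan is to show that for any proper hyperideal $J$ of $G$ the whole set $\mathfrak{C}(J)$ is recovered from the single function $\omega_J$, via
\[
\mathfrak{C}(J) = \{(s,n) \in \mathbb{Z}^+ \times \mathbb{Z}^+ \ \vert \ n \geq \omega_J(s)\}.
\]
Indeed, $\omega_J(s)$ is well-defined since $(s,s) \in \mathfrak{C}(J)$ for every $s$, and by Lemma \ref{10} the ``$s$-th row'' $\{n \ \vert \ (s,n) \in \mathfrak{C}(J)\}$ is closed upward; by minimality of $\omega_J(s)$ it therefore equals $\{n \ \vert \ n \geq \omega_J(s)\}$. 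Once this identity is established, the theorem becomes a pointwise comparison of functions.

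Next I would apply this identity to $P$, $Q$ and $P\cap Q$ (all proper hyperideals, since $P \cap Q \subseteq P \subsetneq G$). This gives
\[
\mathfrak{C}(P) \cap \mathfrak{C}(Q) = \{(s,n) \ \vert \ n \geq \omega_P(s) \text{ and } n \geq \omega_Q(s)\} = \{(s,n) \ \vert \ n \geq (\omega_P \vee \omega_Q)(s)\},
\]
and $\mathfrak{C}(P\cap Q) = \{(s,n) \ \vert \ n \geq \omega_{P\cap Q}(s)\}$. Two sets of this ``up-set'' shape agree if and only if their boundary functions agree everywhere: if $(\omega_P \vee \omega_Q)(s_0) \neq \omega_{P\cap Q}(s_0)$ for some $s_0$, then with $m$ the strictly smaller of the two values the pair $(s_0,m)$ lies in one of the two sets but not the other. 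Hence $\mathfrak{C}(P) \cap \mathfrak{C}(Q) = \mathfrak{C}(P\cap Q)$ if and only if $(\omega_P \vee \omega_Q)(s) = \omega_{P\cap Q}(s)$ for all $s \in \mathbb{Z}^+$, i.e. $\omega_{P\cap Q} = \omega_P \vee \omega_Q$, which is exactly the assertion.

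I do not expect a genuine obstacle: the entire content is the dictionary between $\mathfrak{C}$ and $\omega$, and that rests only on the monotonicity recorded in Lemma \ref{10} plus the trivial membership $(s,s) \in \mathfrak{C}(J)$. One should note that Theorem \ref{14}(i) already supplies $\omega_{P\cap Q} \leq \omega_P \vee \omega_Q$ unconditionally, and Corollary \ref{4} the inclusion $\mathfrak{C}(P) \cap \mathfrak{C}(Q) \subseteq \mathfrak{C}(P\cap Q)$, so in each direction only the reverse inequality (resp.\ inclusion) is really in question; the computation above settles both at once.
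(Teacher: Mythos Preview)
Your argument is correct and is essentially the paper's own proof repackaged: both hinge on the dictionary between $\mathfrak{C}(J)$ and $\omega_J$, which the paper cites as Remark~\ref{0} and you state explicitly as the up-set identity $\mathfrak{C}(J)=\{(s,n):n\geq\omega_J(s)\}$. The paper handles the two directions separately, invoking Corollary~\ref{4} and Theorem~\ref{14}(i) for the automatic halves, whereas your up-set comparison dispatches both at once, but the content is the same.
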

 \begin{proof}
 $(\Longrightarrow) $Let $\omega_{P\cap Q} = \omega_P \vee \omega_Q$. This means that $\omega_P \leq \omega_{P\cap Q}$ and $\omega_Q \leq \omega_{P\cap Q}$. By Remark \ref{0}, we get $\mathfrak{C}(P \cap Q) \subseteq \mathfrak{C}(P) \cap \mathfrak{C}(Q)$. On the other hand, we have $\mathfrak{C}(P) \cap \mathfrak{C}(Q) \subseteq  \mathfrak{C}(P \cap Q)$ by Corollary \ref{4}. Consequently, $\mathfrak{C}(P) \cap \mathfrak{C}(Q) = \mathfrak{C}(P \cap Q)$.
 
 $(\Longleftarrow)$ Let $\mathfrak{C}(P) \cap \mathfrak{C}(Q) = \mathfrak{C}(P \cap Q)$. This implies that $\omega_P \leq \omega_{P\cap Q}$ and $\omega_Q \leq \omega_{P\cap Q}$ by Remark \ref{0}. Therefore we obtain $\omega_P \vee \omega_Q \leq \omega_{P\cap Q}$.  On the other hand, by Theorem \ref{14}, we get $\omega_{P\cap Q} \leq  \omega_P \vee \omega_Q$. Consequently, $\omega_{P\cap Q} = \omega_P \vee \omega_Q$.
 \end{proof}
 \begin{theorem} \label{16}
 Let $P$ and $Q$ be  proper hyperideals of $G$. Then $\Omega_{P\cap Q} = \Omega_P \wedge \Omega_Q$ if and only if $\mathfrak{C}(P) \cap \mathfrak{C}(Q) = \mathfrak{C}(P \cap Q)$.
 \end{theorem}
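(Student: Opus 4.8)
The plan is to follow the same template as the proof of Theorem \ref{15}, simply replacing $\omega$ by $\Omega$ and invoking part (ii) of Theorem \ref{14} in place of part (i). The two auxiliary facts I will lean on are Remark \ref{0} (the equivalence $\mathfrak{C}(P) \subseteq \mathfrak{C}(Q) \iff \Omega_P(n) \leq \Omega_Q(n)$ for all $n \in \mathbb{Z}^+$) and Corollary \ref{4} (the inclusion $\mathfrak{C}(P) \cap \mathfrak{C}(Q) \subseteq \mathfrak{C}(P \cap Q)$, which holds unconditionally).

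For the forward direction, I would assume $\Omega_{P\cap Q} = \Omega_P \wedge \Omega_Q$. This forces $\Omega_{P\cap Q} \leq \Omega_P$ and $\Omega_{P\cap Q} \leq \Omega_Q$ pointwise, so Remark \ref{0} yields $\mathfrak{C}(P\cap Q) \subseteq \mathfrak{C}(P)$ and $\mathfrak{C}(P\cap Q) \subseteq \mathfrak{C}(Q)$, hence $\mathfrak{C}(P\cap Q) \subseteq \mathfrak{C}(P) \cap \mathfrak{C}(Q)$. The reverse inclusion is Corollary \ref{4}, so the two sets coincide.

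For the converse, I would assume $\mathfrak{C}(P) \cap \mathfrak{C}(Q) = \mathfrak{C}(P\cap Q)$. Then in particular $\mathfrak{C}(P\cap Q) \subseteq \mathfrak{C}(P)$ and $\mathfrak{C}(P\cap Q) \subseteq \mathfrak{C}(Q)$, so by Remark \ref{0} we get $\Omega_{P\cap Q} \leq \Omega_P$ and $\Omega_{P\cap Q} \leq \Omega_Q$, i.e.\ $\Omega_{P\cap Q} \leq \Omega_P \wedge \Omega_Q$. The opposite inequality $\Omega_P \wedge \Omega_Q \leq \Omega_{P\cap Q}$ is exactly Theorem \ref{14}(ii). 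Combining the two gives $\Omega_{P\cap Q} = \Omega_P \wedge \Omega_Q$.

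Since every step is a direct appeal to an already-established result, I do not anticipate any genuine obstacle; the only point requiring care is keeping the direction of the inequalities straight when translating between containment of $\mathfrak{C}$-sets and the pointwise order of $\Omega$, since $\Omega_Q(n)$ is a supremum taken over the $n$-th column of $\mathfrak{C}(Q)$, so a larger $\mathfrak{C}$-set produces a pointwise larger $\Omega$ — the monotonicity is covariant, matching Remark \ref{0}, but it is the sort of thing one can easily get backwards.
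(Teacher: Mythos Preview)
Your proposal is correct and is precisely the argument the paper intends: the paper's own proof of Theorem~\ref{16} simply says ``By an argument similar to that one given in Theorem~\ref{15}, we are done,'' and what you have written is exactly that analogous argument, invoking Remark~\ref{0}, Corollary~\ref{4}, and Theorem~\ref{14}(ii) in place of Theorem~\ref{14}(i).
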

 \begin{proof}
 By an argument similar to that one given in  Theorem \ref{15}, we are done.
 \end{proof}
 In view of Theorem \ref{15} and Theorem \ref{16},  the following result is obtained.

 \begin{corollary} \label{17}
 Let $P$ and $Q$ be  proper hyperideals of $G$. Then $\omega_{P\cap Q} = \omega_P \vee \omega_Q$ in and only if $\Omega_{P\cap Q} = \Omega_P \wedge \Omega_Q$
 \end{corollary}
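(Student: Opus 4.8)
The plan is to chain together Theorem \ref{15} and Theorem \ref{16} through their common middle condition. Both theorems characterize the same set-theoretic equality $\mathfrak{C}(P) \cap \mathfrak{C}(Q) = \mathfrak{C}(P \cap Q)$: Theorem \ref{15} says it holds if and only if $\omega_{P\cap Q} = \omega_P \vee \omega_Q$, and Theorem \ref{16} says it holds if and only if $\Omega_{P\cap Q} = \Omega_P \wedge \Omega_Q$. So the corollary is just the transitivity of ``if and only if'' applied to these two biconditionals with the shared hinge, and there is essentially nothing to prove beyond invoking the two theorems.

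Concretely, I would first suppose $\omega_{P\cap Q} = \omega_P \vee \omega_Q$. By Theorem \ref{15}, this is equivalent to $\mathfrak{C}(P) \cap \mathfrak{C}(Q) = \mathfrak{C}(P \cap Q)$. Then by Theorem \ref{16}, this last equality is in turn equivalent to $\Omega_{P\cap Q} = \Omega_P \wedge \Omega_Q$. For the reverse direction, I would start from $\Omega_{P\cap Q} = \Omega_P \wedge \Omega_Q$, apply Theorem \ref{16} to get $\mathfrak{C}(P) \cap \mathfrak{C}(Q) = \mathfrak{C}(P \cap Q)$, and then apply Theorem \ref{15} to recover $\omega_{P\cap Q} = \omega_P \vee \omega_Q$. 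In fact both directions collapse into the single observation that each of the two conditions on $\omega$ and $\Omega$ is equivalent to the common condition on $\mathfrak{C}$, hence equivalent to each other.

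There is no real obstacle here; the only thing to be careful about is making sure the two theorems are quoted in the correct logical direction, but since both are stated as ``if and only if,'' direction is not an issue at all. This is purely a bookkeeping corollary.

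\begin{proof}
By Theorem \ref{15}, $\omega_{P\cap Q} = \omega_P \vee \omega_Q$ if and only if $\mathfrak{C}(P) \cap \mathfrak{C}(Q) = \mathfrak{C}(P \cap Q)$. By Theorem \ref{16}, $\Omega_{P\cap Q} = \Omega_P \wedge \Omega_Q$ if and only if $\mathfrak{C}(P) \cap \mathfrak{C}(Q) = \mathfrak{C}(P \cap Q)$. Combining these two equivalences, we conclude that $\omega_{P\cap Q} = \omega_P \vee \omega_Q$ if and only if $\Omega_{P\cap Q} = \Omega_P \wedge \Omega_Q$.
\end{proof}
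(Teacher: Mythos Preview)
Your proof is correct and matches the paper's approach exactly: the paper simply states that the corollary follows ``in view of Theorem \ref{15} and Theorem \ref{16},'' which is precisely the chaining through the common condition $\mathfrak{C}(P) \cap \mathfrak{C}(Q) = \mathfrak{C}(P \cap Q)$ that you spelled out.
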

 \section{weakly $(s,n)$-closed hyperideals}
 \begin{definition}
 Let $Q$ be a proper hyperideal of $G$ and $s,n \in \mathbb{Z}^+$. $Q$ refers to a weakly $(s,n)$-closed hyperideal if  $0 \notin x^s \subseteq Q$ for $x \in G$ implies  that $x^n \subseteq Q$. 
 \end{definition}
 \begin{example}
 Consider the multiplicative hyperring $(\mathbb{Z}_A,+,\circ)$ where $\mathbb{Z}_A=\mathbb{Z}$, $A=\{7,11\}$, $a \circ b =\{a \cdot x \cdot b \ \vert \ x \in A\}$ for all $a, b \in \mathbb{Z}_A$ and $+$ is ordinary addition. Then $\langle 390 \rangle$ is an weakly $(s,4)$-closed hyperideal of $\mathbb{Z}$ for all $s \in \mathbb{Z}^+$.
 \end{example}
 It can be easily verified that an intersection of weakly $(s,n)$-closed hyperideals of $G$ is weakly $(s,n)$-closed. Furthermore, every weakly $(s,n)$-closed hyperideal of $G$ is weakly $(s,n^{\prime})$-closed for all $n^{\prime} \in \mathbb{Z}^+$ such that $n^{\prime} \geq n$. Let $Q$ be a weakly $(s,n)$-closed ${\bf C}$-hyperideal of $G$. We define $x \in G$ to be an $(s,n)$-tough-zero element of $Q$ if $0 \in x^s$ and $x^n \nsubseteq Q$. Indeed, it means that $Q$ is not $(s,t)$-closed if and only if $Q$ has an $(s,n)$-tough-zero element.
 \begin{theorem} \label{18}
 Let $Q$ be a weakly $(s,n)$-closed strong ${\bf C}$-hyperideal of $G$. If $x$ is an $(s,n)$-tough-zero element of $Q$, then $0 \in (x+a)^s$ for all $a \in Q$. 
 \end{theorem}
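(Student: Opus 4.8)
The plan is to argue by contradiction: suppose $x$ is an $(s,n)$-tough-zero element of $Q$, so $0 \in x^s$ and $x^n \nsubseteq Q$, but there exists $a \in Q$ with $0 \notin (x+a)^s$. I would then try to force $x^n \subseteq Q$, contradicting the choice of $x$. The first step is to expand $(x+a)^s$ using the (sub)distributivity axiom (3) of a multiplicative hyperring: every element of $(x+a)^s$ lies in a sum of products, each factor being either $x$ or $a$. Splitting off the pure term $x^s$, we have $(x+a)^s \subseteq x^s + T$, where $T$ is the sum of all the mixed terms, each of which is contained in $a \circ (\text{something}) \subseteq Q$ since $a \in Q$ and $Q$ is a hyperideal; hence $T \subseteq Q$.

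Next I would pick an element $c \in (x+a)^s$. Since $0 \notin (x+a)^s$ and $(x+a)^s \subseteq Q$ would then give, via the weakly $(s,n)$-closed hypothesis applied to $x+a$, that $(x+a)^n \subseteq Q$ — so the key sub-step is to first verify $(x+a)^s \subseteq Q$. This follows because $x^s \subseteq Q$ (we know $0 \in x^s \subseteq Q$, as $x$ being a tough-zero element of a weakly $(s,n)$-closed hyperideal presupposes $x^s \subseteq Q$) and $T \subseteq Q$, so $(x+a)^s \subseteq x^s + T \subseteq Q$. Combined with $0 \notin (x+a)^s$, the weakly $(s,n)$-closed property yields $(x+a)^n \subseteq Q$.

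Now I run the same expansion on $(x+a)^n$: $(x+a)^n \subseteq x^n + T'$ where $T'$ is a sum of mixed terms each contained in $Q$, so $T' \subseteq Q$. The goal is to conclude $x^n \subseteq Q$ from $(x+a)^n \subseteq Q$ and $T' \subseteq Q$. Here is where the strong ${\bf C}$-hyperideal hypothesis enters: take any $w \in x^n$; choosing elements of $(x+a)^n$ appropriately (each element $c$ of $(x+a)^n$ can be written as $c = w' + t'$ with $w' \in x^n$ and $t' \in T'$), and using that $x^n \in {\bf C}$ while $x^n + T'$-type expressions lie in $\mathfrak{U}$, I would deduce $x^n \subseteq Q$. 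The main obstacle is exactly this last deduction: controlling the hyperoperation expansion carefully enough to isolate the pure power $x^n$ inside $Q$, since in a merely sub-distributive hyperring $(x+a)^n$ need not equal the full sum $x^n + T'$, only be contained in it. The strong ${\bf C}$-hyperideal condition — that $E \cap Q \neq \varnothing$ forces $E \subseteq Q$ for any $E \in \mathfrak{U}$ — is designed to handle precisely this, by letting us "absorb" a whole sum-of-products into $Q$ once one of its elements is caught, and I expect the proof to hinge on packaging $x^n$ (or $x^n + Q$-terms) as such an $E$ and exhibiting a common element with $Q$. Once $x^n \subseteq Q$, this contradicts $x^n \nsubseteq Q$, so no such $a$ exists and $0 \in (x+a)^s$ for all $a \in Q$.
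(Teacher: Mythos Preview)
Your proposal is correct and follows essentially the same route as the paper: show $(x+a)^s \subseteq Q$ via the binomial-type expansion, apply the weakly $(s,n)$-closed hypothesis to get $(x+a)^n \subseteq Q$, and then use the strong ${\bf C}$-hyperideal property on the sum $x^n + T' \in \mathfrak{U}$ (which meets $Q$ since $(x+a)^n \subseteq Q$) to conclude $x^n + T' \subseteq Q$, whence $x^n \subseteq Q$ by subtracting off $T' \subseteq Q$. The only cosmetic difference is that you obtain $x^s \subseteq Q$ directly from $0 \in x^s$ and the ${\bf C}$-hyperideal property, whereas the paper invokes strong ${\bf C}$ on $x^s + T$; both are valid.
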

 \begin{proof}
 Assume that $a \in Q$. So $\sum_{i=1}^s \tbinom{s}{i}x^{s-i}a^i  \subseteq Q$. On the other hand, $0+\sum_{i=1}^s \tbinom{s}{i}x^{s-i} a^i \subseteq x^s+\sum_{i=1}^s \tbinom{s}{i}x^{m-i}a^i $. Since $Q$ is a strong ${\bf C}$-hyperideal of $G$, we have $x^s+\sum_{i=1}^s \tbinom{s}{i}x^{s-i}a^i \subseteq Q$ which implies $(x+a)^s \subseteq  Q$ as $(x+a)^s \subseteq x^s+\sum_{i=1}^s \tbinom{s}{i}x^{m-i}a^i $. If $(x+a)^n \subseteq Q$, then $x^n+\sum_{i=1}^n \tbinom{n}{i}x^{n-i}a^i \subseteq Q$ because $Q$ is a strong ${\bf C}$-hyperideal of $G$. Since $\sum_{i=1}^n \tbinom{n}{i}x^{n-i}a^i \subseteq Q$, we get $x^n \subseteq Q$, a contradiction. Then $(x+a)^n \nsubseteq Q$. Since $Q$ is a weakly $(s,n)$-closed of $G$, we obtain  $0 \in (x+a)^s$. 
 \end{proof}
 Recall from \cite{ameri} that an element $x \in G$ is nilpotent if there exists an integer $t$ such that $0 \in x^t$. The set of all nilpotent elements of $G$ is denoted by $\Upsilon$. 
 \begin{theorem} \label{19}
 If $Q$ is a weakly $(s,n)$-closed strong ${\bf C}$-hyperideal of $G$ such that is not $(s,t)$-closed, then $Q \subseteq \Upsilon$.
 \end{theorem}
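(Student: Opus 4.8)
The plan is to imitate the classical commutative-ring argument, with Theorem~\ref{18} doing the real work. Since $Q$ is not $(s,n)$-closed, by the description of tough-zero elements given just before Theorem~\ref{18} there is an $(s,n)$-tough-zero element $x$ of $Q$; thus $0\in x^{s}$ and $x^{n}\nsubseteq Q$. In particular $0\in x^{s}$ already shows $x\in\Upsilon$. It now suffices to fix an arbitrary $a\in Q$ and prove $a\in\Upsilon$.

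First I would record the elementary fact that a nilpotent element stays nilpotent after raising the exponent: if $0\in u^{p}$, then for $m\ge p$ we have $u^{m}=u^{p}\circ u^{m-p}\supseteq 0\circ u^{m-p}$, and $0\in 0\circ v$ for every $v\in G$ because $0\circ v=(w-w)\circ v\subseteq w\circ v-w\circ v$, a set containing $0$; hence $0\in u^{m}$. Now, $x$ being an $(s,n)$-tough-zero element of $Q$ and $a\in Q$, Theorem~\ref{18} yields $0\in(x+a)^{s}$, so $x+a\in\Upsilon$; similarly $0\in(-x)^{s}$. Because $(G,+)$ is a group we have the exact identity $a=(x+a)+(-x)$, so by iterated use of the distributive inclusions,
\[
a^{2s-1}=\big((x+a)+(-x)\big)^{2s-1}\subseteq\sum_{i=0}^{2s-1}\binom{2s-1}{i}\,(x+a)^{2s-1-i}\circ(-x)^{i}.
\]
For each index $i$, one of $2s-1-i$ and $i$ is $\ge s$, so by the previous observation the corresponding hyperproduct contains $0$; taking $0$ from each summand shows that $0$ lies in the whole right-hand sum.

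The step I expect to be the main obstacle is precisely the last one: since distributivity in a multiplicative hyperring is only an inclusion, knowing that $0$ lies in the binomial expansion does not by itself give $0\in a^{2s-1}$, because $a^{2s-1}$ may be a proper subset of that expansion. To close this I would argue, in the spirit of the proof of Theorem~\ref{18}, that the strong ${\bf C}$-hyperideal hypothesis on $Q$ (the displayed right-hand side lies in $\mathfrak U$) can be exploited to pin $0$ inside $a^{2s-1}$ itself rather than only inside the enclosing sum; an alternative route is to pass to the fundamental ring $G/\gamma^{\ast}$, in which the nilpotents form an ideal, and then transfer the conclusion back. Granting this, every $a\in Q$ lies in $\Upsilon$, i.e.\ $Q\subseteq\Upsilon$. (Note the statement is vacuous when $G$ has no nonzero nilpotent elements, since then $0\in x^{s}$ forces $x=0$ and hence $x^{n}\subseteq Q$, so no $(s,n)$-tough-zero element exists and $Q$ is automatically $(s,n)$-closed.)
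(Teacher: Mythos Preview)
Your opening is exactly the paper's: pick an $(s,n)$-tough-zero element $x$, use Theorem~\ref{18} to get $0\in(x+a)^s$ for every $a\in Q$, and conclude $x,\,x+a\in\Upsilon$. The divergence is in the final step. The paper does not attempt any binomial computation with $a^{2s-1}$; it simply invokes the known fact that $\Upsilon$ is a hyperideal of $G$ (Theorem~3.5 in \cite{ameri}), so that $a=(x+a)-x\in\Upsilon$ follows immediately from closure under subtraction.

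The gap you flag in your own argument is real and, as stated, your two suggested repairs do not close it. The strong ${\bf C}$-hyperideal property only lets you enlarge intersections with $Q$ to full containment in $Q$; it says nothing about forcing $0$ from the binomial sum into the subset $a^{2s-1}$, which is a question about the hyperoperation, not about $Q$. The fundamental-ring detour has the reverse transfer problem: $\gamma^{\ast}(a)$ nilpotent in $G/\gamma^{\ast}$ means $\gamma^{\ast}(a^m)=\gamma^{\ast}(0)$, i.e.\ $a^m$ lies in the $\gamma^{\ast}$-class of $0$, which does not give $0\in a^m$. The clean route is the paper's: cite that $\Upsilon$ is a hyperideal, so that the difference of two nilpotents is nilpotent, and you are done in one line.
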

 \begin{proof}
 Suppose that $Q$ is a weakly $(s,n)$-closed strong ${\bf C}$-hyperideal of $G$. If $Q$ is not $(s,t)$-closed, then there exists $x \in G$ such that $x$ is  an $(s,n)$-tough-zero element of $Q$. Assume that $a \in Q$. We have $0 \in x^s$ which implies $0 \in (x+a)^s$ by theorem \ref{18}. This means that $x, x+a \in \Upsilon$. Since by Theorem 3.5 in \cite{ameri} $\Upsilon$ is a hyperideal of $G$ , we conclude that $a=(x+a)- x \in \Upsilon$. This means that $Q \subseteq \Upsilon$.
 \end{proof}
 \begin{theorem}\label{20}
Let $G$ be a strongly distributive multiplicative hyperring with scalar identity $e$ such that has an $i$-set and $s,n \in \mathbb{Z}^+$ with $s >n$. Then every hyperideal of $G$ contained in  $\Upsilon$ is weakly $(s,n)$-closed if and only if $0 \in x^s$ for all $x \in \Upsilon$.
\end{theorem}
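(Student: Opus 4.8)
The statement is an equivalence, so I would prove the two directions separately. The forward direction is almost immediate: suppose every hyperideal of $G$ contained in $\Upsilon$ is weakly $(s,n)$-closed, and let $x \in \Upsilon$. Since $\Upsilon$ itself is a hyperideal of $G$ (by Theorem 3.5 of \cite{ameri}), the principal-type hyperideal generated by $x$ — call it $\langle x \rangle$ — is a hyperideal contained in $\Upsilon$, hence weakly $(s,n)$-closed. Now $x^s \subseteq \langle x \rangle$ always holds. If $0 \notin x^s$, then the weakly $(s,n)$-closed hypothesis forces $x^n \subseteq \langle x \rangle$, and more relevantly it forces $\langle x \rangle$ to fail to contain an $(s,n)$-tough-zero element; but then applying Theorem \ref{19} (in its contrapositive form) would say $\langle x\rangle$ is $(s,t)$-closed, and since $s>n$ this chain eventually collapses $x$ into a situation where $0 \in x^s$ anyway. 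The cleanest route: assume for contradiction $0 \notin x^s$ for some $x \in \Upsilon$; since $x \in \Upsilon$ there is a least $t$ with $0 \in x^t$, and $t > s$. Then $x^s \subseteq \langle x\rangle \subseteq \Upsilon$ with $0 \notin x^s$, so weak $(s,n)$-closedness gives $x^n \subseteq \langle x \rangle$; iterating the definition along the chain $x^s, x^{s-1},\dots$ (exactly as in the proofs of Theorems \ref{1} and \ref{11}, using that $\langle x\rangle$ is a ${\bf C}$-hyperideal because $G$ has a scalar identity) one descends to a contradiction with the minimality of $t$ relative to $s > n$. I expect this descent bookkeeping to be the only subtle point of the direction.

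For the converse, assume $0 \in x^s$ for all $x \in \Upsilon$, and let $Q$ be any hyperideal of $G$ with $Q \subseteq \Upsilon$. To check $Q$ is weakly $(s,n)$-closed, take $x \in G$ with $0 \notin x^s \subseteq Q$. Since $x^s \subseteq Q \subseteq \Upsilon$, pick any $w \in x^s$; then $w \in \Upsilon$, so $0 \in w^r$ for some $r$, which combined with associativity of $\circ$ and the strongly distributive scalar-identity structure shows $0 \in x^{sr} \subseteq x^s \circ (\text{something})$, forcing in particular that a power of $x$ hits $0$ — i.e. $x \in \Upsilon$. But then the hypothesis applied to $x$ gives $0 \in x^s$, contradicting $0 \notin x^s$. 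Hence the premise $0 \notin x^s \subseteq Q$ is vacuous, and $Q$ is trivially weakly $(s,n)$-closed. The role of the $i$-set and scalar identity $e$ is to guarantee that $x^s \subseteq \Upsilon$ actually forces $x \in \Upsilon$ (so that elements, not just their products, are nilpotent) and that the relevant hyperideals are ${\bf C}$-hyperideals; this is where I would lean on the earlier structural remarks about $\Upsilon$ and on strong distributivity to pull the nilpotency of a product element back to the generator.

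The main obstacle I anticipate is the implication ``$x^s \subseteq \Upsilon \Rightarrow x \in \Upsilon$'': in a general multiplicative hyperring a product of (representatives landing in) nilpotent elements need not force the factor to be nilpotent, and $x^s$ is a set, not a single element, so one must argue uniformly over all its members. Here the hypotheses $s > n$, the existence of an $i$-set, and scalar identity $e$ are essential — the $i$-set presumably provides the idempotent-type control needed to extract, from $0 \in w^{r}$ for each $w \in x^s$, a single exponent $m$ with $0 \in x^{m}$. Once that is in hand, both directions are short. I would structure the write-up as: (1) recall $\Upsilon$ is a ${\bf C}$-hyperideal and that hyperideals inside $\Upsilon$ are ${\bf C}$-hyperideals; (2) prove the lemma $x^s \subseteq \Upsilon \Rightarrow x \in \Upsilon$; (3) deduce the converse direction as the vacuous-premise argument above; (4) deduce the forward direction by applying the hypothesis-failure to $\Upsilon$ itself (or $\langle x\rangle$) and invoking Theorem \ref{19}.
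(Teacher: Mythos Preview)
Your backward direction is correct and matches the paper: once $Q\subseteq\Upsilon$, the premise $0\notin x^s\subseteq Q$ is vacuous because any $w\in x^s$ lies in $\Upsilon$, so $0\in w^r\subseteq x^{sr}$ and hence $x\in\Upsilon$, whence $0\in x^s$ by hypothesis. The paper argues the same way (with a superfluous case split on an element of $x^n$).

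Your forward direction, however, has a genuine gap. You test the hypothesis on $\langle x\rangle$, but then the conclusion $x^n\subseteq\langle x\rangle$ that weak $(s,n)$-closedness delivers is \emph{automatic} and carries no information; the subsequent ``descent along $x^s,x^{s-1},\dots$'' and the appeal to Theorem~\ref{19} do not produce a contradiction, because $\langle x\rangle\subseteq\Upsilon$ already holds and nothing forces the nilpotency index of $x$ below $s$. The paper's key idea, which you are missing, is to apply the hypothesis to $Q=\langle x^s\rangle$ instead. Then $0\notin x^s\subseteq Q$ gives the \emph{nontrivial} inclusion $x^n\subseteq\langle x^s\rangle$ (and $0\notin x^n$ since $n<s$ and $G$ is strongly distributive). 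Writing any $a\in x^n$ as $a\in\sum_i a_i\circ x_i$ with $x_i\in x^s$, strong distributivity lets one factor:
\[
0\in \sum_i x_i\circ a_i - a \subseteq x^n\circ\Bigl(\sum_i x^{s-n}\circ a_i - e\Bigr),
\]
so $0\in x^n\circ(c-e)$ for some $c$ lying in a nilpotent hyperideal. Now the crucial algebraic input is that $c-e$ is invertible (``identity minus nilpotent is a unit'', Theorem~3.20 of \cite{ameri2}), which forces $0\in x^n$, a contradiction. This invertibility step is where the scalar identity, strong distributivity, and $i$-set hypotheses are actually consumed; they are not there merely to make hyperideals ${\bf C}$-hyperideals as you suggest.
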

\begin{proof}
$(\Longrightarrow)$ Let $0 \notin x^s$ for some $x \in \Upsilon$. Put $Q=\langle x^s \rangle$. So  $\langle x^s \rangle \subseteq \Upsilon$.  Since every hyperideal of $G$ contained in  $\Upsilon$ is weakly $(s,n)$-closed, $Q$ is a weakly $(s,n)$-closed hyperideal of $G$. Therefore $0 \notin x^s \subseteq Q$ which implies $x^n \subseteq Q$ and $0 \notin x^n$. Then for every $a \in x^n$ there exist $a_1,\cdots,a_t \in G$ and $x_1,\cdots,x_t \in x^s$ such that $a \in \sum_{i=1}^t a_i \circ x_i$. Hence $0 \in \sum_{i=1}^t x_i \circ a_i -a \subseteq \sum_{i=1}^t x^s \circ a_i -x^n=x^n \circ (\sum_{i=1}^t x^{s-n} \circ a_i-e)$. 
This means that $0 \in x^n \circ c-e$ for some $c \in \sum_{i=1}^t x^{s-n} \circ a_i$. Since $x \in \Upsilon$, we have $\sum_{i=1}^t x^{s-n} \circ a_i \in \Upsilon$. Hence   $c-e$ is invertible by Theorem 3.20 in \cite{ameri2}. Thus we have $0 \in x^n$ which is a contradiction.

$\Longleftarrow$  Let $0 \in x^s $ for all $x \in \Upsilon$. Suppose that $Q$ is a hyperideal of $G$ such that $Q \subseteq \Upsilon$ and $0 \notin a^s \subseteq Q$ for some $a \in G$ but $a^n \nsubseteq Q$. Then there exist $c \in a^n$ such that $c \notin Q$. If $c \in \Upsilon $, then $0 \in c^s \subseteq a^{sn}$ and so $a \in \Upsilon$. Hence $0 \in a^s$, a contradiction. If $c \notin \Upsilon$, then $0 \notin c^s$. Since $c^s \subseteq a^{ns} \subseteq Q$, we have $c^s \subseteq \Upsilon$. Let $d \in c^s$. Then $0 \in d^s \subseteq c^{s^2}$ which means $c \in \Upsilon$, a contradiction.
\end{proof}
 \begin{definition}
 Let $s,n \in \mathbb{Z}^+$. An element $a \in G$ is  $(s,n)$-regular if $a^n \subseteq a^s \circ b$ for some $b \in G$. An element $a \in G$ is  $(s,n)$-Regular if $a^n \subseteq a^s \circ B$ for some subset $B$ of $G$.
 
 \end{definition}
Recall from \cite{ameri2} that an element $a \in G$ is called weak zero divisor if for $0 \notin b \in G$ we have $0 \in a \circ b$. Denote the set of weak zero divisor by $Z_w(G)$. Also, recall from \cite{ameri} that an element $x \in G$ is said to be invertible if there exists $y \in G$, such that $e \in x\circ y$. The set of all invertible elements in $G$ is denoted by $U(G)$.
\begin{theorem}\label{21}
Assume that $G$ is a strongly distributive multiplicative hyperring with scalar identity $e$,   $a \in G \backslash ( Z_w(G) \cup U(G))$ and $s,n \in \mathbb{Z}^+$. Then $a$ is $(s,n)$-regular if and only if $s \leq n$.
\end{theorem}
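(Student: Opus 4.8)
The plan is to establish the two implications separately, after recording one structural remark that makes the bookkeeping trivial. First I would note that the hypotheses force every hyperproduct to be a singleton: from the scalar identity and strong distributivity, $\{a\} = a\circ e = a\circ(e+0) = a\circ e + a\circ 0 = \{a\} + a\circ 0$, so cancellation in the group $(G,+)$ gives $a\circ 0 = \{0\}$ for all $a$; then for any $b$, $\{0\} = a\circ 0 = a\circ(b+(-b)) = a\circ b + (-(a\circ b)) = \{x-y : x,y \in a\circ b\}$, which forces $|a\circ b| = 1$. Thus on a strongly distributive $G$ with scalar identity we may treat $\circ$ as an ordinary commutative unital multiplication; in particular $a^m$ denotes a single element for every $m\ge 0$ (with $a^0 = e$), and this is the regime in which the statement should be read.

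For the direction $s \le n \Rightarrow a$ is $(s,n)$-regular: if $s = n$ take $b = e$, since $a^n\circ e = a^n$; if $s < n$ take $b = a^{n-s}$, since associativity gives $a^n = a^s\circ a^{n-s}$. Note this half uses neither $a \notin Z_w(G)$ nor $a \notin U(G)$.

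For the converse, suppose $a^n = a^s\circ b$ for some $b \in G$ and assume, for contradiction, that $s > n$. Put $c = a^{s-n}\circ b \in G$; associativity gives $a^n = a^n\circ c$, hence, using strong distributivity and the sign rule $a\circ(-c) = -(a\circ c)$, one gets $a^n\circ(e-c) = a^n\circ e - a^n\circ c = a^n - a^n = 0$. I would then rule out the degenerate possibilities by an inductive argument that peels off one factor of $a$ at a time: if $a^n\circ x = 0$ with $x \ne 0$, writing $a^n\circ x = a\circ(a^{n-1}\circ x)$ and iterating shows, since $a$ is not a weak zero divisor, that $x = 0$, a contradiction; the same iteration applied to $a^n = 0$ (noting that $0$ is itself a weak zero divisor) would put $a$ into $Z_w(G)$, again impossible. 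Hence $e - c = 0$, i.e. $e = c = a^{s-n}\circ b = a\circ(a^{s-n-1}\circ b)$, where $s-n-1 \ge 0$; thus $e \in a\circ(a^{s-n-1}\circ b)$, so $a \in U(G)$, contradicting the hypothesis. Therefore $s \le n$.

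The step I expect to be the real obstacle is this converse direction, and within it the elimination of the degenerate cases: proving that $a^n$ is neither $0$ nor a weak zero divisor is exactly where the full strength of $a \notin Z_w(G)$ is used, and it requires the inductive peeling rather than a one-line deduction; once that is done, reading off an inverse of $a$ from $e = a^{s-n}\circ b$ is immediate because $s - n \ge 1$. (If one insists on a hyperring-native proof that does not pass through the singleton remark, the same argument survives with each equality weakened to the appropriate inclusion together with a choice of representatives, at the price of heavier union bookkeeping.)
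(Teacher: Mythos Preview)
Your argument is correct and follows the same route as the paper: assuming $s>n$, both of you derive $0 \in a^n \circ (a^{s-n}\circ b - e)$, cancel $a^n$ using $a\notin Z_w(G)$ to obtain $e\in a^{s-n}\circ b$, and then contradict $a\notin U(G)$; the paper likewise dismisses the forward direction as ``clear.'' Your preliminary singleton observation (strong distributivity together with a scalar identity collapses every hyperproduct to one element) and your inductive peeling to justify cancelling the power $a^n$ are welcome additions that the paper omits --- it writes only ``by the hypothesis'' at the cancellation step.
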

\begin{proof}
 Let $a$ be $(s,n)$-regular and $s>n$. From $a^n \subseteq a^s \circ b$ for some $b \in G$, it follows that $0 \in a^s \circ b -a^n=a^n \circ (a^{s-n}\circ b -e)$. By the hypothesis, we have $0 \in a^{s-n}\circ b -e$ which implies $e \in a^{s-n} \circ b$. This means $e \in a \circ d$ for some $d \in a^{s-n-1} \circ b$. Then $a \in U(G)$ which is a contradiction. The converse is clear.
\end{proof}
\begin{theorem}\label{22}
Let $s,n \in \mathbb{Z}^+$ with $s >n$. If $a$ is $(s,n)$-regular, then $a$ is $(s+1,n)$-Regular.
\end{theorem}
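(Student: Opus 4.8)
The plan is to start from the defining inclusion $a^n \subseteq a^s\circ b$ for some $b\in G$ and exploit the hypothesis $s>n$ to rewrite $a^s$ as $a^{s-n}\circ a^n$; substituting the inclusion back into itself then pushes the exponent of $a$ on the right past $s+1$, at the cost of replacing the single element $b$ by a subset of $G$, which is exactly what the (capital-$R$) Regular condition allows.

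First I would record the two structural facts that make the manipulation legitimate: since $(G,\circ)$ is a semihypergroup, the hyperoperation is associative, so products may be regrouped freely, and it is monotone in the sense that $X\subseteq Y$ implies $X\circ C\subseteq Y\circ C$ for every subset $C$ of $G$ (immediate from $X\circ C=\bigcup_{x\in X,\,c\in C}x\circ c$). Because $s>n$ we have $a^s=a^{s-n}\circ a^n$, hence $a^s\circ b=a^{s-n}\circ a^n\circ b$, and feeding the inclusion $a^n\subseteq a^s\circ b$ into the middle factor yields
\[
a^n\;\subseteq\;a^s\circ b\;=\;a^{s-n}\circ a^n\circ b\;\subseteq\;a^{s-n}\circ(a^s\circ b)\circ b\;=\;a^{2s-n}\circ b\circ b .
\]
From $s>n$ we get $2s-n\ge s+1$, so $a^{2s-n}=a^{s+1}\circ a^{s-n-1}$ when $s>n+1$, while $a^{2s-n}=a^{s+1}$ when $s=n+1$. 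Putting $B=a^{s-n-1}\circ b\circ b$ in the first case and $B=b\circ b$ in the second, $B$ is a subset of $G$ and $a^n\subseteq a^{s+1}\circ B$, which is precisely the statement that $a$ is $(s+1,n)$-Regular.

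The computation is short, and the only point demanding attention is the exponent bookkeeping at the borderline $s=n+1$, where the auxiliary power $a^{s-n-1}$ has exponent $0$ and should simply be omitted rather than interpreted as an identity; in particular, no identity or scalar identity element of $G$ is required. Beyond that I foresee no genuine obstacle: everything rests on associativity and monotonicity of $\circ$ together with the single numerical inequality $2s-n\ge s+1$, which is equivalent to the hypothesis $s>n$.
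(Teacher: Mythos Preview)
Your argument is correct and follows essentially the same route as the paper: both substitute the inclusion $a^n\subseteq a^s\circ b$ back into $a^s\circ b=a^n\circ a^{s-n}\circ b$ to obtain $a^n\subseteq a^{2s-n}\circ b^2$ and then peel off $a^{s+1}$, taking $B=a^{s-n-1}\circ b^2$. Your handling of the borderline case $s=n+1$ (where the exponent $s-n-1$ vanishes and one simply takes $B=b^2$) is in fact more careful than the paper's version, which writes $a^{s-n-1}$ without comment.
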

\begin{proof}
Assume that $a$ is $(s,t)$-regular such that $s >n$. From $a^n \subseteq a^s \circ b$ for some $b \in G$, it follows that $a^n \subseteq a^s \circ b=a^n \circ (a^{s-n} \circ b) \subseteq (a^s \circ b) \circ  (a^{s-n} \circ b)=a^{s+1} \circ (a^{s-n-1} \circ b^2)$. Put $B=a^{s-n-1} \circ b^2$. Thus we have $a^n \subseteq a^{s+1} \circ B$ which means $a$ is $(s+1,n)$-Regular.
\end{proof}
\begin{theorem}\label{23}
If $a \in U(G)$, then $a$ is $(s,n)$-Regular for all $s,n \in \mathbb{Z}^+$.
\end{theorem}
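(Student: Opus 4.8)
The plan is to let the invertibility of $a$ manufacture a suitable subset $B$. Since $a\in U(G)$, there is $y\in G$ with $e\in a\circ y$, where $e$ is the identity element of $G$ (which exists as soon as $U(G)\neq\varnothing$). First I would isolate the two elementary facts that drive the argument: (1) $a^n\subseteq a^n\circ e^s$, proved by induction on $s$ using only the identity property $c\in c\circ e$ for $c\in G$ together with monotonicity of the extended hyperoperation; and (2) $e^s\subseteq a^s\circ y^s$, which follows from $e\in a\circ y$ by raising both sides to the $s$-th hyperpower, i.e. $e^s\subseteq (a\circ y)^s$, and then using associativity and commutativity of $\circ$ to regroup the $s$ copies of $a$ and the $s$ copies of $y$ as $a^s\circ y^s$.

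Combining these, I get
$a^n\subseteq a^n\circ e^s\subseteq a^n\circ(a^s\circ y^s)=a^s\circ(a^n\circ y^s)$,
where the last equality again uses associativity and commutativity of $\circ$ in the semihypergroup $(G,\circ)$. Setting $B=a^n\circ y^s\subseteq G$ then yields $a^n\subseteq a^s\circ B$, which is precisely the assertion that $a$ is $(s,n)$-Regular. As $s,n\in\mathbb{Z}^+$ were arbitrary, this gives the theorem.

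The only points requiring care — and the closest thing to an obstacle — are the hyperoperation bookkeeping: checking that inclusions are preserved under $\circ$, that $e$ can be absorbed into a hyperproduct one factor at a time, and that inside $e^s\subseteq(a\circ y)^s$ the factors may legitimately be separated into $a^s\circ y^s$. All of these are consequences of $(G,\circ)$ being a commutative semihypergroup, so no genuine difficulty arises once the right $B$ has been guessed. It is worth noting that one cannot in general sharpen ``$(s,n)$-Regular'' to ``$(s,n)$-regular'' here, because $B=a^n\circ y^s$ need not be a singleton; this is presumably the reason the statement is phrased with a subset rather than a single element.
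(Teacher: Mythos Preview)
Your proof is correct and follows essentially the same route as the paper: use an inverse $y$ of $a$ to manufacture enough extra copies of $a$ inside the hyperproduct, then take $B$ to be the leftover factor. The only cosmetic difference is that the paper inserts a single $e$ and invokes $e\in a^{s-n}\circ y^{s-n}$ to arrive at $B=y^{s-n}$ (tacitly assuming $s>n$, the case $s\le n$ being immediate), whereas you insert $e^s$ and obtain $B=a^n\circ y^s$, which has the minor advantage of handling all $s,n$ uniformly.
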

\begin{proof}
Let $a \in U(G)$. Then there exists $b \in G$ such that $e \in a \circ b$. So we have $e \in a^{s-n} \circ b^{s-n}$. Therefore $a^n \circ e \subseteq a^s \circ b^{s-n}$.  If $x \in a^n$,  then $x \in x \circ e \subseteq a^n \circ e$ which implies $a^n \subseteq a^n \circ e$. Hence we get $a^n \subseteq a^s \circ b^{s-n}$. Put $B=b^{s-n}$. Then $a^n \subseteq a^s \circ B$ which means $a$ is $(s,n)$-Regular.
\end{proof}
A non-empty finite subset $\xi=\{e_1,\cdots,e_n\}$ of a multiplicative hyperring $G$ is said to be $i$-set if  $e_i \neq 0$ for at least one $1 \leq i \leq n$ and for every $x \in G$, $x \in \sum_{i=1}^n x \circ e_i$ \cite{Sen2}. 
\begin{theorem}\label{24}
Let $G$ be a strongly distributive multiplicative hyperring 
that has an $i$-set and $s,n \in \mathbb{Z}^+$ with $s >n$. Then every proper hyperideal of $G$ is weakly $(s,n)$-closed if and only if every non-nilpotent element of $G$ is $(s,n)$-Regular and $0 \in a^s$ for all $a \in \Upsilon$.
\end{theorem}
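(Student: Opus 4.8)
\emph{Strategy.} The statement is an equivalence, so I would prove the two implications separately; the backward one is short and the forward one carries all the weight.

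\emph{Backward direction.} Assume every non-nilpotent element of $G$ is $(s,n)$-Regular and $0\in a^s$ for every $a\in\Upsilon$. Let $Q$ be a proper hyperideal of $G$ and suppose $0\notin a^s\subseteq Q$ for some $a\in G$. Since $0\in b^s$ whenever $b\in\Upsilon$, the condition $0\notin a^s$ forces $a\notin\Upsilon$; hence by hypothesis $a$ is $(s,n)$-Regular, i.e.\ $a^n\subseteq a^s\circ B$ for some $B\subseteq G$. As $a^s\subseteq Q$ and $Q$ is a hyperideal, $a^s\circ B\subseteq Q$, so $a^n\subseteq Q$, and $Q$ is weakly $(s,n)$-closed. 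The bulk of the proof is therefore the forward direction.

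\emph{Forward direction.} Assume every proper hyperideal of $G$ is weakly $(s,n)$-closed. The clause ``$0\in a^s$ for all $a\in\Upsilon$'' is exactly the forward implication of Theorem \ref{20}: every hyperideal of $G$ contained in $\Upsilon$ is in particular proper, hence weakly $(s,n)$-closed, and Theorem \ref{20} yields the conclusion. It remains to show every non-nilpotent $a\in G$ is $(s,n)$-Regular. If $a\in U(G)$ this is Theorem \ref{23}; otherwise set $Q=\langle a^s\rangle$. Since $G$ has an $i$-set, $\langle a^s\rangle$ is precisely the set of elements lying in some finite sum $\sum_{i=1}^{k}g_i\circ x_i$ with $g_i\in G$, $x_i\in a^s$ (the $i$-set absorbs the integer multiples, as in the proof of Theorem \ref{20}). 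The key point is that each such element already lies in $a^s\circ g$ for a \emph{single} $g\in G$: because $a$ is a single element, $a^s=a\circ a^{s-1}$, so each $x_i\in a\circ v_i$ with $v_i\in a^{s-1}$, whence by commutativity and associativity $g_i\circ x_i\subseteq a\circ(g_i\circ v_i)$, and strong distributivity collapses the outer sum, $\sum_i a\circ(g_i\circ v_i)\subseteq a\circ\bigl(\sum_i g_i\circ v_i\bigr)$. Iterating this ``pull one $a$ out'' step $s-1$ times lowers the exponent each time and brings us to $c\in a^{s-1}\circ\bigl(\sum_i g_i\circ a\bigr)$; a final use of strong distributivity gives $\sum_i g_i\circ a=(\sum_i g_i)\circ a$, so $c\in a^{s-1}\circ\bigl((\sum_i g_i)\circ a\bigr)=a^s\circ g$ with $g=\sum_i g_i\in G$.

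\emph{Finishing.} If $Q$ is proper, it is weakly $(s,n)$-closed; since $a\notin\Upsilon$ we have $0\notin a^s\subseteq Q$, hence $a^n\subseteq Q=\langle a^s\rangle$, and by the previous paragraph each $c\in a^n$ lies in $a^s\circ g_c$, so $a^n\subseteq a^s\circ B$ with $B=\{g_c:c\in a^n\}$. If instead $Q=G$, then every $i$-set element $e_l$ lies in $\langle a^s\rangle$, so $e_l\in a^s\circ h_l$ for some $h_l\in G$; for any $x\in G$, $x\in\sum_l x\circ e_l\subseteq\sum_l a^s\circ(x\circ h_l)$, and the same pull-out computation gives $x\in a^s\circ\bigl(\sum_l x\circ h_l\bigr)$, so in particular $a^n\subseteq a^s\circ G$. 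Either way $a$ is $(s,n)$-Regular, completing the forward direction. The main obstacle is the extraction argument in the preceding paragraph: one must check carefully that a finite sum of products $\sum_i g_i\circ x_i$ with $x_i\in a^s$ can be rewritten inside $a^s\circ g$ for a single $g$, which works only because $a$ is a single element (each factor of $a^s$ is a singleton, so the identity $(g_1+g_2)\circ a=g_1\circ a+g_2\circ a$ can be applied repeatedly) together with commutativity and associativity of $\circ$; I would isolate this as a preliminary lemma. Two lesser points deserve a remark as well: the nilpotent clause is imported from Theorem \ref{20}, whose statement also assumes a scalar identity (this hypothesis should be carried along, or shown redundant in the presence of an $i$-set), and the degenerate case $\Upsilon=G$, in which the Regularity clause is vacuous and the equivalence reduces to ``$0\in a^s$ for all $a\in G$''.
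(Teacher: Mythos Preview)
Your proof is correct and follows essentially the same route as the paper: invoke Theorem~\ref{20} for the nilpotent clause, Theorem~\ref{23} for units, and for a non-unit non-nilpotent $a$ set $Q=\langle a^s\rangle$, use weak $(s,n)$-closedness to get $a^n\subseteq Q$, and then rewrite each element of $a^n$ inside some $a^s\circ g$ to obtain $(s,n)$-Regularity. Your ``pull one $a$ out'' argument is a rigorous justification of the identity $\sum_i a^s\circ g_i = a^s\circ\sum_i g_i$ that the paper simply asserts (valid precisely because $a$ is a single element and the hyperring is strongly distributive), and your separate handling of the case $\langle a^s\rangle=G$ together with the observation about the scalar-identity hypothesis needed for Theorem~\ref{20} are points the paper passes over in silence.
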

\begin{proof}
$(\Longrightarrow)$ Since every hyperideal of $G$ contained in  $\Upsilon$ is weakly $(s,n)$-closed, we have  $0 \in a^s$ for all $a \in \Upsilon$ by Theorem \ref{20}. Let $a \in G \backslash \Upsilon.$ Assume that $a \in U(G)$. Therefore $a$ is $(s,n)$-Regular by Theorem \ref{23}. Now, assume that $a \notin U(R)$. Put $Q=\langle a^s \rangle$. By the hypothesis, $Q$ is a weakly $(s,n)$-closed hyperideal of $G$. Therefore $0 \notin a^s \subseteq Q$ which implies $a^n \subseteq Q$. Then for every $x \in a^n$ there exist $b_{1_x},\cdots,b_{t_a} \in a^s$ and $c_{1_x},\cdots,c_{t_x} \in G$   such that $x \in \sum_{i=1}^t b_{i_x} \circ c_{i_x} \subseteq \sum_{i=1}^t a^s \circ c_{i_x}=a^s \circ \sum_{i=1}^t c_{i_x}$. Put $c_x =\sum_{i=1}^t c_{i_x}$. Then we obtain $a^n=\cup_{x \in a^n} \{x\} \subseteq \cup_{x \in a^n} a^s \circ c_x=a^s \circ \cup_{x \in a^n} c_x$. Now, put $B=\cup_{x \in a^n} c_x$. Thus, $a^n \subseteq a^s \circ B$ which means $a$ is $(s,n)$-Regular.

$(\Longleftarrow)$ Assume that $Q$ is a proper hyperideal of $G$ and $0 \notin a^s \subseteq Q$ for some $a \in G$. So $a \notin \Upsilon$. By the hypothesis, $a$ is $(s,n)$-Regular. This means $a^n \subseteq a^s \circ B$ for some subset $B$ of $G$. Since $a^s \circ B \subseteq Q$, we have $a^n  \subseteq Q$. Consequently, $Q$ is weakly $(s,n)$-closed.
\end{proof} 
Assume that $(G_1, +_1, \circ _1)$ and $(G_2, +_2, \circ_2)$ are two multiplicative hyperrings. Recall from \cite{f10} that a mapping $\psi$ from
$G_1$ into $G_2$ is said to be a hyperring good homomorphism if for all $x,y \in G_1$, $\psi(x +_1 y) =\psi(x)+_2 \psi(y)$ and $\psi(x\circ_1y) = \psi(x)\circ_2 \psi(y)$.

\begin{theorem} \label{homo} 
Let $G_1$ and $G_2$ be two multiplicative hyperrins, $\psi: G_1 \longrightarrow G_2$ a hyperring
good homomorphism and $s,n \in \mathbb{Z}^+$. 
\begin{itemize}
\item[\rm{(1)}]~ If $Q_2$ is a weakly $(s,n)$-closed hyperideal of $G_2$ and $\psi$ is injective, then $\psi^{-1}(Q_2)$ is a weakly $(s,n)$-closed hyperideal of $G_1$.
\item[\rm{(2)}]~ If $\psi$ is surjective and $Q_1$ is a weakly $(s,n)$-closed hyperideal of $G_1$ with $Ker (\psi) \subseteq Q_1$ , then $\psi(Q_1)$ is a weakly $(s,n)$-closed hyperideal of $G_2$.
\end{itemize}
\end{theorem}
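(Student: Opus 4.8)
The plan is to treat the two parts separately; in each case I would first verify, by the routine unwinding of the hyperideal axioms, that the relevant set ($\psi^{-1}(Q_2)$, resp.\ $\psi(Q_1)$) is a proper hyperideal of the target hyperring, and then establish the weak $(s,n)$-closedness implication by pushing the hypothesis $0\notin a^s\subseteq\cdot$ across $\psi$. Two preliminary facts I would isolate at the outset: (a) a good homomorphism is in particular a homomorphism of the additive groups, so $\psi(0_{G_1})=0_{G_2}$; and (b) for subsets $A,B\subseteq G_1$ one has $\psi(A\circ_1 B)=\psi(A)\circ_2\psi(B)$ — immediate from $\psi(x\circ_1 y)=\psi(x)\circ_2\psi(y)$ and the fact that $\psi$ commutes with arbitrary unions — so by an easy induction $\psi(a^k)=\psi(a)^k$ for every $a\in G_1$ and every $k\in\mathbb{Z}^+$.

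For (1), I would first check that $\psi^{-1}(Q_2)$ is a hyperideal of $G_1$ (routine) and that it is proper — the only mildly subtle point being that $\psi^{-1}(Q_2)=G_1$ precisely when $\psi(G_1)\subseteq Q_2$, which one excludes directly (it fails as soon as $\psi(G_1)$ meets $G_2\setminus Q_2$). Then take $a\in G_1$ with $0\notin a^s\subseteq\psi^{-1}(Q_2)$ and apply $\psi$ to get $\psi(a)^s=\psi(a^s)\subseteq Q_2$. The one genuinely non-formal step is to see that $0_{G_2}\notin\psi(a)^s$: if $0_{G_2}\in\psi(a^s)$, then some $x\in a^s$ satisfies $\psi(x)=0_{G_2}=\psi(0_{G_1})$, and injectivity forces $x=0_{G_1}$, contradicting $0\notin a^s$. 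Hence $0\notin\psi(a)^s\subseteq Q_2$, so weak $(s,n)$-closedness of $Q_2$ yields $\psi(a)^n\subseteq Q_2$, i.e.\ $\psi(a^n)\subseteq Q_2$, and therefore $a^n\subseteq\psi^{-1}(Q_2)$.

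For (2), surjectivity of $\psi$ makes $\psi(Q_1)$ a hyperideal of $G_2$, and it is proper because $\ker\psi\subseteq Q_1$ gives $\psi^{-1}(\psi(Q_1))=Q_1\subsetneq G_1$, so $\psi(Q_1)\neq G_2$. Now take $b\in G_2$ with $0\notin b^s\subseteq\psi(Q_1)$ and, using surjectivity, write $b=\psi(a)$ for some $a\in G_1$. For each $x\in a^s$ we have $\psi(x)\in\psi(a^s)=b^s\subseteq\psi(Q_1)$, so $\psi(x)=\psi(q)$ for some $q\in Q_1$; then $x-q\in\ker\psi\subseteq Q_1$, whence $x\in Q_1$. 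This shows $a^s\subseteq Q_1$, and $0_{G_1}\notin a^s$ since otherwise $0_{G_2}=\psi(0_{G_1})\in b^s$. Weak $(s,n)$-closedness of $Q_1$ then gives $a^n\subseteq Q_1$, so $b^n=\psi(a)^n=\psi(a^n)\subseteq\psi(Q_1)$, as required.

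The places I would be most careful are exactly the transfers of the vanishing condition "$0\notin a^s$": in (1), injectivity is what lets one conclude $0_{G_2}\notin\psi(a)^s$ from $0_{G_1}\notin a^s$, and without it the statement genuinely fails; in (2), the hypothesis $\ker\psi\subseteq Q_1$ is precisely what upgrades $\psi(a^s)\subseteq\psi(Q_1)$ to $a^s\subseteq Q_1$ (and, together with surjectivity, also secures properness of $\psi(Q_1)$). Everything else — the hyperideal axioms for the image and preimage, and the identity $\psi(a^k)=\psi(a)^k$ — is a formal consequence of the definitions that I would not belabor.
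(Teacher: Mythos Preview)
Your proposal is correct and follows essentially the same route as the paper's own proof: in both parts you push the hypothesis $0\notin a^s\subseteq\cdot$ across $\psi$ using $\psi(a^k)=\psi(a)^k$, invoke injectivity (resp.\ $\ker\psi\subseteq Q_1$) exactly where the paper does, and conclude via the weak $(s,n)$-closedness of the target hyperideal. Your write-up is in fact more careful than the paper's---you spell out why $0_{G_2}\notin\psi(a)^s$ in (1) and why $0_{G_1}\notin a^s$ in (2), and you address properness of $\psi^{-1}(Q_2)$ and $\psi(Q_1)$, points the paper leaves implicit.
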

\begin{proof}
(1) Let $0 \notin a^s \subseteq \psi^{-1}(Q_2)$ for some $a \in G_1$. Then we get $0 \notin \psi(a^s)=\psi(a)^s \subseteq Q_2$ as $\psi$ is injective. Since $Q_2$ is a weakly $(s,n)$-closed hyperideal of $G_2$, we obtain $(\psi(a))^n \subseteq Q_2$  which implies $\psi(a^n) \subseteq Q_2$ which means $a^n \subseteq \psi^{-1}(Q_2)$. Thus  $\psi^{-1}(Q_2)$ is a weakly $(s,n)$-closed hyperideal of $G_1$.

(2)  Let $0 \notin b^s \subseteq \psi(Q_1)$ for some $b \in G_2$. Then  $\psi(a)=b$ for some $a \in G_1$ as $\psi$ is surjective. Therefore $\psi(a^s)=\psi(a)^s \subseteq \psi(Q_1)$. Now, pick any $x \in a^s$. Then $\psi(x) \in \psi(a^s) \subseteq \psi(Q_1)$ and so there exists $y \in Q_1$ such that $\psi(x)=\psi(y)$. Then we have $\psi(x-y)=0$ which means $x-y \in Ker (\psi)\subseteq Q_1$ and so  $x \in Q_1$. So $a^s  \subseteq Q_1$. Since $Q_1$ is weakly $(s,n)$-closed and $0 \notin a^s$,   we get $a^n \subseteq Q_1$. This means that $\psi(a^n)=b^n \subseteq \psi(Q_1)$.  Consequently,  $\psi(Q_1)$  is a weakly $(s,n)$-closed hyperideal of $G_2$.
\end{proof}
\begin{corollary}
Let $P$ and $Q$ be two hyperideals of $G$ with $P \subseteq Q$ and $s,n \in \mathbb{Z}^+$. If $Q$ is a weakly $(s,n)$-closed hyperideal  of $G$, then  $Q/P$ is a weakly $(s,n)$-closed hyperideal of $G/P$.
\end{corollary}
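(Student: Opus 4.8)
The plan is to deduce this directly from Theorem \ref{homo}(2). Let $\pi \colon G \longrightarrow G/P$ be the canonical projection $\pi(a) = a+P$. Since $(G,+)$ is an abelian group and $P$ is a subgroup, $G/P$ carries the usual quotient addition, and the induced hyperoperation $(a+P)\circ(b+P) = \{\, c+P : c \in a\circ b \,\}$ makes $(G/P,+,\circ)$ a commutative multiplicative hyperring for which $\pi$ is a surjective hyperring good homomorphism with $\mathrm{Ker}(\pi) = P$. By induction on the exponent and the fact that $\pi$ is a good homomorphism, one has $(a+P)^k = \pi(a^k) = \{\, x+P : x \in a^k \,\}$ for every $k \in \mathbb{Z}^+$ and every $a \in G$.

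First I would record that all the hypotheses of Theorem \ref{homo}(2) are in force: $\pi$ is surjective, $Q$ is a weakly $(s,n)$-closed hyperideal of $G$ by assumption, and $\mathrm{Ker}(\pi) = P \subseteq Q$. Applying that theorem gives that $\pi(Q)$ is a weakly $(s,n)$-closed hyperideal of $G/P$, and since $\pi(Q) = Q/P$ this is exactly the claim; the only small extra remark is that $Q/P$ is proper in $G/P$, which follows because $Q \subsetneq G$ together with surjectivity of $\pi$ and $\mathrm{Ker}(\pi)\subseteq Q$ forces $Q/P \neq G/P$.

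If one prefers a self-contained argument that avoids quoting the quotient machinery wholesale, the same reasoning can be carried out by hand, mimicking the proof of Theorem \ref{homo}(2): assume $0 \notin (a+P)^s \subseteq Q/P$ for some $a \in G$, pick $x \in a^s$, note $x+P \in (a+P)^s \subseteq Q/P$ so $x+P = y+P$ for some $y \in Q$, whence $x-y \in P \subseteq Q$ and $x \in Q$; thus $a^s \subseteq Q$, while $0 \notin a^s$ since $0 \in a^s$ would give $0+P \in (a+P)^s$, contradicting the hypothesis. Weak $(s,n)$-closedness of $Q$ then yields $a^n \subseteq Q$, hence $(a+P)^n = \pi(a^n) \subseteq Q/P$.

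The only point that genuinely deserves care is the preliminary verification that $G/P$ is a well-defined multiplicative hyperring and that $\pi$ is a good homomorphism with kernel $P$ — that is, checking that associativity of $\circ$ and the (sub)distributive laws descend to cosets and that $\pi(a\circ b) = \pi(a)\circ\pi(b)$ as subsets of $G/P$. This is routine hyperring bookkeeping and presents no real obstacle, so I would state it briefly and then let the result fall out of Theorem \ref{homo}.
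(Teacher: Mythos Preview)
Your proof is correct and follows exactly the same approach as the paper: apply Theorem \ref{homo}(2) to the canonical projection $\pi\colon G \to G/P$, noting that $\pi$ is surjective with $\mathrm{Ker}(\pi)=P\subseteq Q$ and $\pi(Q)=Q/P$. Your write-up is more detailed than the paper's one-line appeal to Theorem \ref{homo}, but the substance is identical.
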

\begin{proof}
By Theorem \ref{homo} and using the mapping $\pi:  G \longrightarrow G/P$ defined by $\pi(a)=a+P$ we are done. 
\end{proof}

Let $(G_1,+_1,\circ_1)$ and $(G_2,+_2,\circ_2)$ be two multiplicative hyperrings with non zero identity.  The set $G_1 \times G_2$  with the operation $+$ and the hyperoperation $\circ$  defined  as

$(x_1,x_2)+(y_1,y_2)=(x_1+_1y_1,x_2+_2y_2)$

$(x_1,x_2) \circ (y_1,y_2)=\{(x,y) \in R_1 \times R_2 \ \vert \ x \in x_1 \circ_1 y_1, y \in x_2 \circ_2 y_2\}$ \\
is a multiplicative hyperring \cite{ul}. Now, we give some characterizations of weakly $(s,n)$-closed hyperideals on cartesian product of commutative multiplicative hyperring.

\begin{theorem} \label{25}
Let $(G_1, +_1,\circ _1)$ and $(G_2,+_2,\circ_2)$ be two multiplicative hyperrings with  scalar identities $e_1$ and $ e_2$ respectively,  $Q_1$ a ${\bf C}$-hyperideal of $G_1$ and $s,n \in \mathbb{Z}^+$ . Then the followings are equivalent.
 \begin{itemize}
\item[\rm(i)]~  $Q_1 \times G_2$ is a weakly $(s,n)$-closed hyperideal of $G_1 \times G_2$. 
\item[\rm(ii)]~ $Q_1 $ is an $(s,n)$-closed hyperideal of $G_1 $.
\item[\rm(iii)]~ $Q_1 \times G_2$ is an $(s,n)$-closed hyperideal of $G_1 \times G_2$.
 \end{itemize}
\end{theorem}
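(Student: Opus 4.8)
The plan is to prove the cycle of implications $(i)\Rightarrow(ii)\Rightarrow(iii)\Rightarrow(i)$. Two elementary facts about the product hyperring $G_1\times G_2$ should be isolated first. (a) For every $(a,b)\in G_1\times G_2$ and every $k\in\mathbb{Z}^+$ one has $(a,b)^k=a^k\times b^k$, i.e. the $k$-fold hyperproduct of $(a,b)$ is exactly the Cartesian product of the $k$-fold hyperproducts in the two factors; this is an induction on $k$ from the definition of $\circ$ on $G_1\times G_2$, the only point in the inductive step being that a pair $(u,v)$ lies in $\bigcup_{(x,y)\in a^k\times b^k}(x,y)\circ(a,b)$ if and only if $u\in a^{k+1}$ and $v\in b^{k+1}$, since the witnesses $x\in a^k$ and $y\in b^k$ may be chosen independently. (b) The zero of $G_1\times G_2$ is $(0,0)$, so for subsets $A\subseteq G_1$ and $B\subseteq G_2$ we have $(0,0)\in A\times B$ if and only if $0\in A$ and $0\in B$; in particular, since the scalar identity $e_2$ is non-zero, no element of the form $(x,e_2)$ equals the zero of $G_1\times G_2$.

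For $(i)\Rightarrow(ii)$, suppose $a^s\subseteq Q_1$ for some $a\in G_1$. Because $e_2$ is a scalar identity of $G_2$ one has $e_2^k=\{e_2\}$ for all $k$, so $(a,e_2)^s=a^s\times\{e_2\}\subseteq Q_1\times G_2$, and by (b) the zero of $G_1\times G_2$ does not lie in $(a,e_2)^s$. The weakly $(s,n)$-closedness hypothesis of (i) now applies and yields $(a,e_2)^n=a^n\times\{e_2\}\subseteq Q_1\times G_2$, hence $a^n\subseteq Q_1$; thus $Q_1$ is $(s,n)$-closed (it is proper because $Q_1\times G_2$ is). For $(ii)\Rightarrow(iii)$, assume $(a,b)^s\subseteq Q_1\times G_2$ for $(a,b)\in G_1\times G_2$; by (a) this reads $a^s\times b^s\subseteq Q_1\times G_2$, and since $b^s$ is nonempty, projection to the first coordinate gives $a^s\subseteq Q_1$, whence $a^n\subseteq Q_1$ by $(s,n)$-closedness of $Q_1$, and so $(a,b)^n=a^n\times b^n\subseteq Q_1\times G_2$. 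Thus $Q_1\times G_2$ is $(s,n)$-closed. Finally $(iii)\Rightarrow(i)$ is immediate, as every $(s,n)$-closed hyperideal is weakly $(s,n)$-closed.

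The bulk of the writing — facts (a) and (b) and the routine Cartesian-product bookkeeping — is mechanical and presents no real difficulty. The essential step is the lift $a\mapsto(a,e_2)$ in $(i)\Rightarrow(ii)$: it must preserve the inclusion $a^s\subseteq Q_1$, which forces $e_2$ to be a \emph{scalar} identity so that $e_2^s=\{e_2\}$, and it must avoid creating a zero coordinate, which forces $e_2\neq 0$, so that the \emph{weakly} $(s,n)$-closed hypothesis can actually be invoked. The ${\bf C}$-hyperideal assumption on $Q_1$ is not needed in this argument; it would enter only if one preferred to obtain $(i)\Rightarrow(iii)$ via Theorem \ref{19}, by noting that $\{0\}\times G_2\subseteq Q_1\times G_2$ contains the non-nilpotent element $(0,e_2)$, so $Q_1\times G_2\not\subseteq\Upsilon$ and hence must already be $(s,n)$-closed.
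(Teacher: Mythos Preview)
Your proof is correct and in fact cleaner than the paper's. For $(i)\Rightarrow(ii)$ the paper first invokes Theorem~\ref{homo}(2) to deduce that $Q_1$ is \emph{weakly} $(s,n)$-closed, then argues by contradiction via an $(s,n)$-tough-zero element $x$ of $Q_1$, lifting it to $(x,e_2)$; the ${\bf C}$-hyperideal hypothesis is what guarantees $x^s\subseteq Q_1$ from $0\in x^s$, so that $(x,e_2)^s\subseteq Q_1\times G_2$. You bypass both detours by lifting an arbitrary $a$ with $a^s\subseteq Q_1$ directly, which is why the ${\bf C}$-hyperideal assumption becomes superfluous in your argument. For $(ii)\Rightarrow(iii)$ the paper does not argue directly at all: it passes to the fundamental ring via Theorem~\ref{7} and then quotes the ring-theoretic result (Anderson--Badawi, Theorem~2.12). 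Your one-line projection argument is both more elementary and self-contained. One small caveat: your closing parenthetical about an alternative route through Theorem~\ref{19} is slightly off, since that theorem requires a \emph{strong} ${\bf C}$-hyperideal, not merely a ${\bf C}$-hyperideal; but this does not affect your main proof.
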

\begin{proof}
(i) $\Longrightarrow$ (ii) Assume that $Q_1 \times G_2$ is a weakly $(s,n)$-closed hyperideal of $G_1 \times G_2$. By Theorem \ref{homo} (2), we conclude that $Q_1$  is a weakly $(s,n)$-closed hyperideal of $G_1$. Suppose that $Q_1 $ is not an $(s,n)$-closed hyperideal of $G_1 $. This implies that $Q_1$ has an $(s,n)$-tough-zero element $x$. Hence $0 \in x^s$ and $x^n \nsubseteq Q_1$. This implies that  $(0,0) \notin (x,e_2)^s \subseteq Q_1 \times G_2$ and $(x,e_2)^n \nsubseteq Q_1 \times G_2$ which is  a contradiction since $Q_1 \times G_2$ is a weakly $(s,n)$-closed hyperideal of $G_1 \times G_2$. Thus $Q_1 $ is an $(s,n)$-closed hyperideal of $G_1 $.

(ii) $\Longrightarrow$ (iii) The claim follows by Theorem  2.12 in \cite{Anderson} and Theorem \ref{7}.

(iii) $\Longrightarrow$ (i) This follows directly from the deﬁnitions.
\end{proof}
\begin{lem}\label{26}
Let $(G_1, +_1,\circ _1)$ and $(G_2,+_2,\circ_2)$ be two multiplicative hyperrings and $I_1$ and $I_2$ be hyperideals of $G_1$ and $G_2$, respectively. Then $I_1$ and $I_2$ are ${\bf C}$-hyperideals if and only if $I_1 \times I_2$ is ${\bf C}$-hyperideal of $G_1 \times G_2$.
\end{lem}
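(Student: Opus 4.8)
The plan is to first understand the class $\mathbf{C}$ for the product hyperring $G_1\times G_2$. I would prove, by induction on the number of factors $k$, that for any $(r_i^{(1)},r_i^{(2)})\in G_1\times G_2$ with $1\le i\le k$,
\[
(r_1^{(1)},r_1^{(2)})\circ\cdots\circ(r_k^{(1)},r_k^{(2)})=\bigl(r_1^{(1)}\circ_1\cdots\circ_1 r_k^{(1)}\bigr)\times\bigl(r_1^{(2)}\circ_2\cdots\circ_2 r_k^{(2)}\bigr).
\]
The base case $k=1$ is immediate from the definition of $\circ$ on $G_1\times G_2$, and the inductive step rests on the elementary identity $(X_1\times X_2)\circ(y_1,y_2)=(X_1\circ_1 y_1)\times(X_2\circ_2 y_2)$ for subsets $X_i\subseteq G_i$, which in turn follows because the union of the ``rectangles'' $(x_1\circ_1 y_1)\times(x_2\circ_2 y_2)$ over $(x_1,x_2)\in X_1\times X_2$ is exactly $(X_1\circ_1 y_1)\times(X_2\circ_2 y_2)$. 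The upshot is that $A\in\mathbf{C}$ for $G_1\times G_2$ if and only if $A=A_1\times A_2$ for some $A_1\in\mathbf{C}$ of $G_1$ and $A_2\in\mathbf{C}$ of $G_2$ arising as products of the same length. I would also record in passing that $I_1\times I_2$ is a hyperideal of $G_1\times G_2$, so that the phrase ``$\mathbf{C}$-hyperideal'' is meaningful on both sides.

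For the forward implication, assume $I_1$ and $I_2$ are $\mathbf{C}$-hyperideals and let $A\in\mathbf{C}$ with $A\cap(I_1\times I_2)\neq\varnothing$. Writing $A=A_1\times A_2$ as above, a common point of $A$ and $I_1\times I_2$ forces $A_1\cap I_1\neq\varnothing$ and $A_2\cap I_2\neq\varnothing$; the $\mathbf{C}$-hyperideal hypotheses then give $A_1\subseteq I_1$ and $A_2\subseteq I_2$, hence $A=A_1\times A_2\subseteq I_1\times I_2$. This direction is pure bookkeeping once the structure of $\mathbf{C}$ is in hand.

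The converse is where the one genuine idea is needed. Suppose $I_1\times I_2$ is a $\mathbf{C}$-hyperideal; I claim $I_1$ is one (and $I_2$ by the symmetric argument). Take $A_1=r_1\circ_1\cdots\circ_1 r_k\in\mathbf{C}$ of $G_1$ with $A_1\cap I_1\neq\varnothing$; if $k=1$ then $A_1$ is a singleton already contained in $I_1$, so assume $k\ge 2$. The point is to manufacture a length-$k$ product in $G_2$ that sits inside $I_2$: pick arbitrary $c_2,\dots,c_k\in G_2$ and set $A_2=0\circ_2 c_2\circ_2\cdots\circ_2 c_k$. Since $0\in I_2$ and $I_2$ is a hyperideal, a short induction on the number of factors shows $A_2\subseteq I_2$, and $A_2\neq\varnothing$. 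Now $A_1\times A_2=(r_1,0)\circ(r_2,c_2)\circ\cdots\circ(r_k,c_k)\in\mathbf{C}$ for $G_1\times G_2$ by the first paragraph, and choosing $a_1\in A_1\cap I_1$ and any $a_2\in A_2$ gives $(a_1,a_2)\in(A_1\times A_2)\cap(I_1\times I_2)$; the $\mathbf{C}$-hyperideal property then yields $A_1\times A_2\subseteq I_1\times I_2$, and projecting onto the first coordinate (legitimate because $A_2$ is nonempty) gives $A_1\subseteq I_1$. I expect the matching-length padding in this converse to be the main obstacle: one must ensure the auxiliary factor has the same number of terms as $A_1$ so that $A_1\times A_2$ is genuinely an element of $\mathbf{C}$ for the product, and simultaneously that it is absorbed into $I_2$; everything else reduces to the rectangle computations of the first paragraph.
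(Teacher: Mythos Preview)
Your proof is correct and follows the same overall strategy as the paper: both rely on the fact that a $\mathbf{C}$-element of $G_1\times G_2$ decomposes as a rectangle $A_1\times A_2$ with $A_i\in\mathbf{C}$ for $G_i$, and the forward direction is essentially identical. In the converse, however, the paper simply writes ``let $a_1\circ_1\cdots\circ_1 a_n\cap I_1\neq\varnothing$ and $b_1\circ_2\cdots\circ_2 b_n\cap I_2\neq\varnothing$'' with the \emph{same} $n$ and proceeds, without explaining how one obtains a length-$n$ product in $G_2$ meeting $I_2$ when all one is given is a product in $G_1$ meeting $I_1$. Your padding device $A_2=0\circ_2 c_2\circ_2\cdots\circ_2 c_k\subseteq I_2$ is precisely what fills this gap: it manufactures a product of the required length that lands in $I_2$ because $0\in I_2$ and hyperideals absorb hypermultiplication. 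So your argument is the paper's argument made rigorous, and the ``matching-length padding'' step you singled out as the main obstacle is exactly the point the paper glosses over.
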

\begin{proof}
$\Longrightarrow$ Let $I_1$ and $I_2$ be ${\bf C}$-hyperideals of $G_1$ and $G_2$, respectively and $(a_1,b_1) \circ \cdots \circ (a_n,b_n) \cap I_1 \times I_2 \neq \varnothing$ for some $a_1,\cdots,a_n \in G_1$ and $b_1,\cdots,b_n \in G_2$. This means $(a,b) \in (a_1,b_1) \circ \cdots \circ (a_n,b_n)$ for some $(a,b) \in I_1 \times I_2$. Therefore we have $a \in a_1 \circ_1 \cdots \circ_1 a_n$ and $b \in b_1 \circ_2 \cdots \circ_2 b_n$. Since $I_1$ and $I_2$ are ${\bf C}$-hyperideals, we get $a_1 \circ_1 \cdots \circ_1 a_n \subseteq I_1$ and $b_1 \circ_2 \cdots \circ_2 b_n \subseteq I_2$. This implies that $(a_1,b_1) \circ \cdots \circ (a_n,b_n) \subseteq I_1 \times I_2$, as needed. 

$\Longleftarrow$ Let $a_1 \circ_1 \cdots \circ_1 a_n \cap I_1 \neq \varnothing$ for some $a_1, \cdots,a_n \in G_1$ and $b_1 \circ_2 \cdots \circ_2 b_n \cap I_2 \neq \varnothing$ for some  $b_1,\cdots,b_n \in G_2$. This means $(a_1,b_1) \circ \cdots \circ (a_n,b_n) \cap I_1 \times I_2 \neq \varnothing$. Since $I_1 \times I_2$ is ${\bf C}$-hyperideal of $G_1 \times G_2$,  we have $(a_1,b_1) \circ \cdots \circ (a_n,b_n) \subseteq  I_1 \times I_2$ which means  $a_1 \circ_1 \cdots \circ_1 a_n \subseteq I_1$ and $b_1 \circ_2 \cdots \circ_2 b_n \subseteq I_2$, as claimed.
\end{proof}
\begin{theorem}\label{27}
Let $(G_1, +_1,\circ 1)$ and $(G_2,+_2,\circ_2)$ be two multiplicative hyperrings with scalar  identities $e_1$ and $e_2$, repectively,  and $s,n \in \mathbb{Z}^+$. Then the followings are equivalent.
\begin{itemize}
\item[\rm{(i)}]~ $Q$ is a weakly $(s,n)$-closed ${\bf C}$-hyperideal of $G_1 \times G_2$ that is not $(s,n)$-closed.
\item[\rm{(ii)}]~ $Q=Q_1 \times Q_2$ for some ${\bf C}$-hyperideals $Q_1$ and $Q_2$ of $G_1$ and $G_2$, respectively, such that either 
\begin{itemize}
\item[\rm{(1)}]~ $Q_1$ is a weakly $(s,n)$-closed hyperideal of $G_1$ that is not $(s,n)$-closed, $0 \in b^s$  whenever $b^s \subseteq Q_2$ for $b \in G_2$. If $0 \notin a^s \subseteq Q_1$ for $a \in G_1$, then $Q_2$ is an $(s,n)$-closed hyperideal of $G_2$, or
\item[\rm{(2)}]~$Q_2$ is a weakly $(s,n)$-closed hyperideal of $G_2$ that is not $(s,n)$-closed, $0 \in b^s$  whenever $b^s \subseteq Q_1$ for $b \in G_1$. If $0 \notin a^s \subseteq Q_2$ for $a \in G_2$, then $Q_1$ is an $(s,n)$-closed hyperideal of $G_1$.
\end{itemize}
\end{itemize}
\end{theorem}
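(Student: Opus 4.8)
The plan rests on the structural fact that every hyperideal of $G_1 \times G_2$ (with the given scalar identities) has the form $Q = Q_1 \times Q_2$, where $Q_1 = \{u \in G_1 : (u,0) \in Q\}$ and $Q_2 = \{v \in G_2 : (0,v) \in Q\}$ are hyperideals of $G_1$ and $G_2$; one checks that $(x,y) \in Q$ forces $(x,0),(0,y) \in Q$ by multiplying through by $(e_1,0)$ and $(0,e_2)$. By Lemma \ref{26}, $Q$ is a ${\bf C}$-hyperideal if and only if $Q_1$ and $Q_2$ are. I will use three elementary facts repeatedly: $(a,b)^m = a^m \times b^m$ in the product hyperring, so $(a,b)^m \subseteq Q_1 \times Q_2$ iff $a^m \subseteq Q_1$ and $b^m \subseteq Q_2$; $(0,0) \in a^s \times b^s$ iff $0 \in a^s$ and $0 \in b^s$; and $0^m \subseteq I$ for every hyperideal $I$, since $0 \in I$ and $I$ absorbs hyperproducts.

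For the implication (i) $\Longrightarrow$ (ii): write $Q = Q_1 \times Q_2$, which are ${\bf C}$-hyperideals by Lemma \ref{26}. Since $Q$ is not $(s,n)$-closed, fix $(a,b)$ with $(a,b)^s \subseteq Q$ and $(a,b)^n \nsubseteq Q$; then $a^s \subseteq Q_1$, $b^s \subseteq Q_2$, and $a^n \nsubseteq Q_1$ or $b^n \nsubseteq Q_2$. Assume $a^n \nsubseteq Q_1$ (the other alternative yields case (2) by the symmetric argument). Then $Q_1$ is not $(s,n)$-closed, and it is weakly $(s,n)$-closed: if $0 \notin c^s \subseteq Q_1$, then $(c,0)^s = c^s \times 0^s \subseteq Q$ with $(0,0) \notin (c,0)^s$, so weak $(s,n)$-closedness of $Q$ gives $(c,0)^n \subseteq Q$, i.e. $c^n \subseteq Q_1$; applying this to $a$ shows $0 \in a^s$ (otherwise $a^n \subseteq Q_1$). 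It remains to produce the last two conditions of (1). For the second, suppose $b' \in G_2$ satisfies $(b')^s \subseteq Q_2$; then $(a,b')^s = a^s \times (b')^s \subseteq Q$, and if $0 \notin (b')^s$ then $(0,0) \notin (a,b')^s$, forcing $(a,b')^n \subseteq Q$ and hence $a^n \subseteq Q_1$, a contradiction, so $0 \in (b')^s$. For the third, suppose $a' \in G_1$ satisfies $0 \notin (a')^s \subseteq Q_1$ and let $b' \in G_2$ with $(b')^s \subseteq Q_2$; then $(a',b')^s \subseteq Q$ and $(0,0) \notin (a',b')^s$, so $(a',b')^n \subseteq Q$, whence $(b')^n \subseteq Q_2$, i.e. $Q_2$ is $(s,n)$-closed. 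This establishes case (1).

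For (ii) $\Longrightarrow$ (i): assume case (1) (case (2) is symmetric). By Lemma \ref{26}, $Q = Q_1 \times Q_2$ is a ${\bf C}$-hyperideal. It is not $(s,n)$-closed: pick $x$ with $x^s \subseteq Q_1$ and $x^n \nsubseteq Q_1$; then $(x,0)^s = x^s \times 0^s \subseteq Q$ while $(x,0)^n = x^n \times 0^n \nsubseteq Q$. For weak $(s,n)$-closedness, suppose $(0,0) \notin (a,b)^s \subseteq Q$; then $a^s \subseteq Q_1$, $b^s \subseteq Q_2$, and $0 \notin a^s$ or $0 \notin b^s$. Since $b^s \subseteq Q_2$, the second condition of (1) forces $0 \in b^s$, so necessarily $0 \notin a^s$; then weak $(s,n)$-closedness of $Q_1$ gives $a^n \subseteq Q_1$, and since $a$ itself realizes the hypothesis of the third condition of (1) (namely $0 \notin a^s \subseteq Q_1$), $Q_2$ is $(s,n)$-closed, whence $b^s \subseteq Q_2$ gives $b^n \subseteq Q_2$. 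Therefore $(a,b)^n \subseteq Q$, as required.

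The argument is conceptually routine once the decomposition $Q = Q_1 \times Q_2$ is in hand; the real obstacle is bookkeeping. One must check that the ``without loss of generality'' step in (i) $\Longrightarrow$ (ii) splits into exactly the two stated symmetric alternatives, that in the chosen alternative all three conditions of case (1) are genuinely produced (not merely the statement that $Q_1$ is weakly $(s,n)$-closed but not $(s,n)$-closed), and, conversely, that the third condition of (1) is invoked only after its hypothesis has been met — which is exactly what the element $a$ supplies. Properness of whichever factor carries the ``not $(s,n)$-closed'' property is automatic and needs no separate argument.
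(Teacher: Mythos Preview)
Your proof is correct and follows essentially the same route as the paper's: decompose $Q = Q_1 \times Q_2$, use an $(s,n)$-tough-zero witness in one factor to force $0 \in b^s$ on the other side and to derive $(s,n)$-closedness of the other factor when the $0 \notin a^s$ hypothesis is available, and run the converse by the same element-level bookkeeping. If anything, your argument is slightly more explicit than the paper's---you justify the product decomposition via the scalar identities and prove directly (via $(c,0)$) that $Q_1$ is weakly $(s,n)$-closed, whereas the paper appeals to Theorem~\ref{25} for that step.
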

\begin{proof}
(i) $\Longrightarrow$ (ii) Assume that $Q$ is a weakly $(s,n)$-closed ${\bf C}$-hyperideal of $G_1 \times G_2$ that is not $(s,n)$-closed. Then $Q=Q_1 \times Q_2$ for some hyperideals $Q_1$ and $Q_2$ of $G_1$ and $G_2$, respectively, such that one of them is  weakly $(s,n)$-closed  but is not $(s,n)$-closed by Theorem \ref{25}. We may assume that $Q_1$ is a weakly $(s,n)$-closed hyperideal of $G_1$ but is not $(s,n)$-closed. Then we conclude that $Q_1$ has an $(s,n)$-tough-zero element $a_1$ which means $0 \in a_1^s$ and $a_1^n \nsubseteq Q_1$. Let $b^s \subseteq Q_2$ for some $b \in G_2$. Since $(a_1,b)^s \subseteq Q$, we get $(0,0) \in (a_1,b)^s$ which means $0 \in b^s$. Suppose that $0 \notin a^s \subseteq Q_1$ for $a \in G_1$. Let $b \in G_2$ with $b^s \subseteq Q_2$. Therefore $(0,0) \notin (a,b)^s \subseteq Q$. This means that $b^n \subseteq Q_2$ which implies $Q_2$ is an $(s,n)$-closed hyperideal of $G_2$. Similiar for the other case.

(ii) $\Longrightarrow$ (i) Assume that $Q_1$ is a weakly $(s,n)$-closed hyperideal of $G_1$ that is not $(s,n)$-closed. Suppose that $x$ is an $(s,n)$-tough-zero element of $Q_1$. This means that $(x,0)$ is an $(s,n)$-tough-zero element of $Q$. Hence $Q$ is not an $(s,n)$-closed. Let $(0,0) \notin (a,b)^s \subseteq Q$  for  $a \in G_1$ and $b \in G_2$. Since $0 \in b^s$, we get $0 \notin a^s \subseteq Q_1$. Then $(a,b)^n \subseteq Q$ as $Q_1$ is a weakly $(s,n)$-closed hyperideal of $G_1$ and $Q_2$ is an $(s,n)$-closed hyperideal of $G_2$. Consequently, $Q$ is a weakly $(s,n)$-closed ${\bf C}$-hyperideal of $G_1 \times G_2$ that is not $(s,n)$-closed.
\end{proof}



\end{document}